\def\al{\alpha}
\def\be{\beta}
\def\de{\delta}
\def\La{\Lambda}
\def\ta{\tau}
\def\cG{{\mathcal G}}
\def\cQ{{\mathcal Q}}
\def\ang#1{{\langle #1 \rangle}}
\def\P{{\mathbb P}}
\def\Z{{\mathbb Z}}
\def\N{{\mathbb N}}
\def\det{\operatorname{det}}
\def\dim{\operatorname{dim}}
\def\Ext{\operatorname{Ext}}
\def\gcd{\operatorname{gcd}}
\def\gldim{\operatorname{gldim}}
\def\HH{\operatorname{HH}}
\def\Hom{\operatorname{Hom}}
\def\Im{\operatorname{Im}}
\def\Ker{\operatorname{Ker}}
\def\op{\operatorname{op}}
\def\pd{\operatorname{pd}}
\def\rank{\operatorname{rank}}
\def\rk{\operatorname{rk}}
\def\tr{\operatorname{tr}}
\def\tails{\mathsf{tails}\,}
\def\coh{\mathsf{coh}\,}
\def\Db{\mathsf{D^b}}
\def\mod{\mathsf{mod}\,}
\theoremstyle{plain} 
\newtheorem{thm}{Theorem}[section]
\newtheorem{cor}[thm]{Corollary}
\newtheorem*{thm*}{Theorem}
\newtheorem*{cor*}{Corollary}
\newtheorem{lem}[thm]{Lemma}
\newtheorem{prop}[thm]{Proposition}
\theoremstyle{definition}
\newtheorem{rem}[thm]{Remark}
\newcommand{\thmref}[1]{Theorem~\ref{#1}}
\newcommand{\lemref}[1]{Lemma~\ref{#1}}
\newcommand{\propref}[1]{Proposition~\ref{#1}}
\newcommand{\remref}[1]{Remark~\ref{#1}}
\numberwithin{equation}{section}
\title[Hochschild cohomology related to graded down-up algebras]
{Hochschild cohomology related to graded down-up algebras with weights $(1,n)$}
\author{Ayako Itaba}
\address{
Department of Mathematics, 
Faculty of Science, Tokyo University of Science,
1-3 Kagurazaka, Shinjyuku, Tokyo, 162-8601, Japan}
\email{itaba@rs.tus.ac.jp}
\author{Kenta Ueyama}
\address{
Department of Mathematics, 
Faculty of Education, Hirosaki University, 
1 Bunkyocho, Hirosaki, Aomori, 036-8560, Japan}
\email{k-ueyama@hirosaki-u.ac.jp} 
\begin{document}
\begin{abstract}
Let $A=A(\alpha, \beta)$ be a graded down-up algebra with $(\deg x, \deg y)=(1,n)$ and $\beta \neq 0$,
and let $\nabla A$ be the Beilinson algebra of $A$.
If $n=1$, then a description of the Hochschild cohomology group of $\nabla A$ is known.
In this paper, we calculate the Hochschild cohomology group of $\nabla A$ for the case $n \geq 2$.
As an application, we see that the structure of the bounded derived category of the noncommutative projective scheme of $A$
is different depending on whether
$\left(\begin{smallmatrix} 1&0 \end{smallmatrix}\right)\left(\begin{smallmatrix} \alpha &1 \\ \beta &0 \end{smallmatrix}\right)^n\left(\begin{smallmatrix} 1 \\ 0 \end{smallmatrix}\right)$
is zero or not.
Moreover, it turns out that there is a difference between the cases $n=2$ and $n\geq 3$
in the context of Grothendieck groups.

\end{abstract}
\subjclass[2020]{16E40, 16S38, 16E05, 18G80}
\keywords{Hochschild cohomology, down-up algebra, Beilinson algebra, derived equivalence}
\maketitle
\section{Introduction}

Throughout let $k$ be an algebraically closed field of characteristic $0$.
A graded algebra
\[ A(\al, \be) := k\ang{x,y}/(x^2y-\be yx^2 -\al xyx,\; xy^2-\be y^2x -\al yxy) \quad \deg x=m, \deg y=n \in \N^+ \]
with parameters $\al, \be \in k$ is called a graded down-up algebra.
Down-up algebras were originally introduced by Benkart and Roby \cite{BR}
in the study of the down and up operators on partially ordered sets.
Since then, various aspects of these algebras have been investigated;
for example, structures \cite{BW}, \cite{KMP}, \cite{Zh}, representations \cite{CM}, homological invariants \cite{CHS},
connections with enveloping algebras of Lie algebras \cite{B}, \cite{BR}, invariant theory \cite{KK}, \cite{KKZ}, and so on.
In particular, from the viewpoint of noncommutative projective geometry, the following property is of importance.

\begin{thm}[\cite{KMP}] \label{thm:as}
Let $A=A(\al, \be)$ be a graded down-up algebra.
Then $A$ is a noetherian AS-regular algebra of dimension $3$ if and only if $\be \neq 0$.
\end{thm}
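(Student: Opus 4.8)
The plan is to treat the two directions separately, the ``if'' direction being the substantial one. For ``only if'' I argue by contraposition. If $\be=0$, put $z=xy-\al yx$; then $x^2y-\al xyx=xz$ and $xy^2-\al yxy=zy$, so $A(\al,0)=k\ang{x,y}/(xz,\,zy)$. Since $x$ and $z$ are nonzero in $A(\al,0)$ (their degrees lie below the degrees of the relations) while $xz=0$, the algebra $A(\al,0)$ is not a domain; equivalently, as $xz,zy\in(xy)$ there is an algebra surjection $A(\al,0)\twoheadrightarrow k\ang{x,y}/(xy)$, and $k\ang{x,y}/(xy)$ is not noetherian, since in terms of its monomial basis $\{y^ax^b\}_{a,b\ge0}$ the right ideal generated by $\{y^ix\mid i\ge1\}$ is not finitely generated. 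Either way $A(\al,0)$ is not a noetherian AS-regular algebra.

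For ``if'', assume $\be\ne0$, let $\mu\in k$ be a root of $t^2-\al t-\be$, set $\la=\al-\mu$ (the other root, so $\la\mu=-\be\ne0$ and in particular $\mu\ne0$), and put $\Om=xy-\mu yx\in A:=A(\al,\be)$. A short expansion in the free algebra yields the identities
\[
x^2y-\be yx^2-\al xyx=x\Om-\la\Om x,\qquad xy^2-\be y^2x-\al yxy=\Om y-\la y\Om,
\]
from which two things follow at once: in $A$ one has $x\Om=\la\Om x$ and $\Om y=\la y\Om$, so $\Om$ is a normal element; and both defining relations of $A$ lie in the two-sided ideal $(\Om)$ of $k\ang{x,y}$, so $A/\Om A\cong k\ang{x,y}/(xy-\mu yx)=k_\mu[x,y]$, a skew polynomial ring, hence a noetherian AS-regular domain of global dimension $2$. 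Comparing the Hilbert series $H_A(t)=\big((1-t^m)(1-t^n)(1-t^{m+n})\big)^{-1}$ (from the Benkart--Roby PBW basis of $A$; see \cite{BR}, \cite{KMP}) with $H_{A/\Om A}(t)=\big((1-t^m)(1-t^n)\big)^{-1}$ forces left multiplication by $\Om$ on $A$ to be injective, i.e.\ $\Om$ is a regular normal element. The standard lifting arguments through $\Om$ (via the $\Om$-adic filtration) then give that $A$ is noetherian, is a domain, and satisfies $\gldim A\le\gldim(A/\Om A)+1=3$, and $H_A$ shows $\GKdim A=3$.

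It remains to verify the AS-Gorenstein condition, for which I would exhibit the minimal free resolution of the trivial module
\[
0\to A(-2m-2n)\xrightarrow{\binom{y}{-\be x}}A(-2m-n)\oplus A(-m-2n)\xrightarrow{M}A(-m)\oplus A(-n)\xrightarrow{\binom{x}{y}}A\to k\to0,
\]
where $M=\left(\begin{smallmatrix}xy-\al yx&y^2\\-\be x^2&-\be yx-\al xy\end{smallmatrix}\right)$ records the two relations. Exactness at $A$ and at $A(-m)\oplus A(-n)$ is part of the presentation of $A$; that $\binom{y}{-\be x}$ is a syzygy of the two relations is a direct check; left multiplication by $y$ sends distinct normal monomials to distinct normal monomials, so the leftmost map is injective; and a Hilbert-series count then yields exactness at the remaining two spots. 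This gives $\gldim A=3$, and applying $\Hom_A(-,A)$ to the resolution and observing that the transposed complex is, after the degree shift by $2m+2n$, again this resolution establishes the AS-Gorenstein condition with Gorenstein parameter $2m+2n$. Hence $A$ is a noetherian AS-regular algebra of dimension $3$.

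The step I expect to be the real work is the ``if'' direction, and within it the passage from ``$A/\Om A$ is noetherian'' to ``$A$ is noetherian and a domain''---which rests on the $\Om$-adic filtration---together with the self-duality check for the resolution; verifying the normalcy of $\Om$ and the isomorphism $A/\Om A\cong k_\mu[x,y]$ is, by contrast, entirely mechanical.
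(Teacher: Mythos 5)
The paper offers no proof of this statement: it is imported wholesale from \cite{KMP} (noetherianity and the domain property), together with the verification of AS-regularity carried out there and in \cite{BR}. Your argument is, in substance, that standard proof, and it is correct. For the ``only if'' direction, your second argument is the one to keep: the surjection $A(\al,0)\twoheadrightarrow k\ang{x,y}/(xy)$ onto a monomial algebra that is visibly not right noetherian settles the matter, whereas your first argument (``not a domain'') tacitly invokes the nontrivial fact that noetherian AS-regular algebras of dimension $3$ are domains. For the ``if'' direction I checked the two identities $x^2y-\be yx^2-\al xyx=x\Om-\la\Om x$ and $xy^2-\be y^2x-\al yxy=\Om y-\la y\Om$, the resulting normality of $\Om=xy-\mu yx$ and the isomorphism $A/\Om A\cong k\ang{x,y}/(xy-\mu yx)$, the syzygy relation for the column $(y,-\be x)^{T}$, and the Hilbert-series bookkeeping in the rank-$(1,2,2,1)$ resolution of $k$; all are correct, and the self-duality of that resolution (up to the row/column operations carrying the transposed matrix of relations to the matrix of left syzygies) does yield the Gorenstein condition with parameter $2(m+n)$. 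The one genuine external input you cannot avoid is the Benkart--Roby PBW basis $\{y^{i}(xy)^{j}x^{k}\}$, valid for all parameters via the diamond lemma, which you use both to see that $\Om$ is regular and that left multiplication by $y$ is injective; as long as that is cited rather than claimed, the proof is complete.
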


A graded down-up algebra has played a key role as a test case for more complicated situations in noncommutative projective geometry.

Let $A= \bigoplus_{i \in \N} A_i=A(\al, \be) $ be a graded down-up algebra with $\be \neq 0$, so that $A$ is AS-regular.
Let $\ell = 2(\deg x+ \deg y)= 2(m+n)$, so that $\ell$ is the Gorenstein parameter of $A$.
Then the Beilinson algebra of $A$ is defined by
\[ \nabla A :=
\begin{pmatrix} A_0 & A_1 & \cdots & A_{\ell-1} \\
0 & A_0 & \cdots & A_{\ell-2} \\
\vdots & \vdots & \ddots & \vdots \\
0 & 0 & \cdots & A_0
\end{pmatrix}
\]
with the multiplication $(a_{ij})(b_{ij})=\left( \sum _{k=0}^{\ell-1}a_{kj}b_{ik} \right)$.
For example, if $\deg x=1, \deg y=1$, then $\nabla A$ is isomorphic to the quotient of the path algebra of the quiver
\begin{align*}
\xymatrix@C=4pc@R=2pc{
1 \ar@<0.5ex>[r]^(0.35){x_1} \ar@<-0.5ex>[r]_(0.65){y_1}  
&2 \ar@<0.5ex>[r]^(0.35){x_2} \ar@<-0.5ex>[r]_(0.65){y_2}
&3 \ar@<0.5ex>[r]^(0.35){x_3} \ar@<-0.5ex>[r]_(0.65){y_3} 
&4
}
\end{align*}
modulo the ideal generated by the relations
\begin{align*}
x_1x_2y_3-\be y_1x_2x_3 -\al x_1y_2x_3,\quad x_1y_2y_3-\be y_1y_2x_3 -\al y_1x_2y_3.
\end{align*}
Let $\tails A$ denote the quotient category of finitely generated graded right $A$-modules by the Serre subcategory of finite dimensional modules, 
and $\mod \nabla A$ the category of finitely generated right $\nabla A$-modules. 
Note that $\tails A$ is considered as the category of coherent sheaves on the noncommutative projective scheme associated to $A$ (see \cite{AZ}).
We write $\Db(\tails A)$ and $\Db(\mod \nabla A)$ for the bounded derived categories
of $\tails A$ and $\mod \nabla A$, respectively. 
The following is obtained as a special case of Minamoto-Mori's theorem \cite[Theorem 4.14]{MM}.

\begin{thm} \label{thm:MM}
If $A=A(\al, \be)$ is a graded down-up algebra with $\be \neq 0$, then $\nabla A$ is extremely Fano of global dimension $2$,
and there exists an equivalence of triangulated categories 
\[\Db(\tails A) \cong \Db(\mod \nabla A).\] 
\end{thm}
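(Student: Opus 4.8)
The plan is to obtain \thmref{thm:MM} directly from the Minamoto--Mori correspondence \cite[Theorem 4.14]{MM} together with \thmref{thm:as}; the substance of the argument is then just to verify the hypotheses of \cite{MM} for $A$ and to pin down the numerical invariant that controls the conclusion. First, since $\be\neq 0$, \thmref{thm:as} tells us that $A=A(\al,\be)$ is a noetherian AS-regular algebra of global dimension $d=3$; it is in addition a connected graded, locally finite, finitely generated $k$-algebra (generated by $x$ and $y$), which are the standing hypotheses under which the Beilinson construction recalled above and \cite[Theorem 4.14]{MM} operate. I would record here the one mildly delicate point, namely that $A$ need not be generated in degree $1$ (it fails to be so as soon as one of $\deg x,\deg y$ exceeds $1$), so that one is applying \cite{MM} in its weighted-grading form.

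Second, I would compute the Gorenstein parameter of $A$. A graded down-up algebra with $\be\neq 0$ has two defining relations, one in degree $2m+n$ and one in degree $m+2n$, where $m=\deg x$ and $n=\deg y$, and the trivial module $k$ admits the minimal graded free resolution
\[ 0\to A(-2m-2n)\to A(-2m-n)\oplus A(-m-2n)\to A(-m)\oplus A(-n)\to A\to k\to 0 . \]
This resolution is self-dual under $\uHom_A(-,A)$ up to a shift by $2m+2n$, so $\uExt^{i}_A(k,A)=0$ for $i\neq 3$ while $\uExt^{3}_A(k,A)\cong k(2m+2n)$, and hence the Gorenstein parameter of $A$ is $\ell=2(m+n)$. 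Since $m,n\geq 1$ we always have $\ell=2(m+n)\geq 4>3=d$. This strict inequality $\ell>d$ is exactly the regime in which $\nabla A$ is \emph{extremely} Fano rather than merely Fano: its Fano index $\ell-d+1=2(m+n)-2$ is then at least $2$.

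Finally, I would feed ``$A$ is noetherian AS-regular of dimension $3$ with Gorenstein parameter $\ell=2(m+n)\geq 4$'' into \cite[Theorem 4.14]{MM}, which produces the three assertions simultaneously: the Beilinson algebra $\nabla A$ (by construction the $\ell\times\ell$ upper triangular matrix algebra on the spaces $A_i$ displayed above) is a finite-dimensional $k$-algebra of global dimension $d-1=2$, it is extremely Fano, and there is a triangle equivalence $\Db(\tails A)\cong\Db(\mod\nabla A)$. The latter is realized by a tilting object $T\in\Db(\tails A)$ with endomorphism algebra $\nabla A$, assembled from the twists $\pi A,\pi A(1),\dots,\pi A(\ell-1)$ of the structure sheaf, where $\pi\colon\grmod A\to\tails A$ is the quotient functor; that $T$ is tilting rests on the AS-regularity of $A$, the $\uExt$-vanishing read off from the resolution above, and the fact that $\ell$ is precisely the Gorenstein parameter. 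The main thing requiring care is simply to confirm that every step of \cite{MM} goes through without the ``generated in degree $1$'' assumption; but AS-regularity, the relevant $\uExt$-vanishing, and the inequality $\ell\geq d$ are all insensitive to the grading weights, so no difficulty arises, and indeed \cite[Theorem 4.14]{MM} is stated in the generality we require.
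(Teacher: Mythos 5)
Your proposal follows exactly the route the paper takes: the paper states this result as an immediate special case of \cite[Theorem 4.14]{MM}, with the only inputs being AS-regularity from \thmref{thm:as} and the identification of the Gorenstein parameter $\ell=2(m+n)$ via the standard length-three resolution of $k$, both of which you verify correctly. The only quibble is your side remark that the strict inequality $\ell>d$ is what makes $\nabla A$ \emph{extremely} Fano --- in \cite{MM} the Beilinson algebra of an AS-regular algebra is extremely Fano as part of the theorem's conclusion, not by virtue of $\ell>d$ --- but this does not affect the argument, which is correct and essentially identical to the paper's.
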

We remark that a Fano algebra was renamed as an $n$-representation infinite algebra in \cite{HIO}
from the viewpoint of higher dimensional Auslander-Reiten theory.
By \thmref{thm:MM}, the Beilinson algebras of down-up algebras are important not only in noncommutative projective geometry
but also in representation theory of finite dimensional algebras.

Our interest here is to study the Hochschild cohomology $\HH^i(\nabla A)$ of the Beilinson algebra $\nabla A$ of a down-up algebra $A$.
It is known that the Hochschild cohomology of the Beilinson algebra of an AS-regular algebra $A$ is closely related to
the Hochschild cohomology of $\tails A$ and the infinitesimal deformation theory of $\tails A$ (see \cite{Bel}, \cite{LV2}, \cite{LV1}).
In \cite{Bel}, Belmans computed the Hochschild cohomology of noncommutative planes and noncommutative quadrics,
or in other words, the Hochschild cohomology of the Beilinson algebras of 3-dimensional quadratic or cubic AS-regular $\Z$-algebras.
It should be noted that, to describe the Hochschild cohomology, he used a geometric technique based on the classification of the point schemes of 3-dimensional AS-regular algebras.
Since the point schemes of down-up algebras are divided into three cases
($\P^1 \times \P^1$, a double curve of bidegree $(1,1)$, or two curves of bidegree $(1,1)$ in general position),
\cite[Table 2]{Bel} implies the following result.

\begin{thm}\label{thm:Bel}
Let $A=A(\al, \be)$ be a graded down-up algebra with $\deg x= \deg y=1$ and $\be \neq 0$.
Then
\begin{itemize}
\item $\dim_k \HH^0(\nabla A)=1;$
\item 
$\dim_k \HH^1(\nabla A)=
\begin{cases}
6 \quad \text{if}\ \al=0,\\
3 \quad \text{if}\ \al\neq 0\ \text{and}\ \al^2+4\be=0,\\
1 \quad \text{if}\ \al\neq 0\ \text{and}\ \al^2+4\be\neq 0;
\end{cases}$
\item $\dim_k \HH^2(\nabla A)=
\begin{cases}
9 \quad \text{if}\ \al=0,\\
6 \quad \text{if}\ \al\neq 0\ \text{and}\ \al^2+4\be=0,\\
4 \quad \text{if}\ \al\neq 0\ \text{and}\ \al^2+4\be\neq 0;
\end{cases}$
\item $\dim_k \HH^i(\nabla A)= 0$ for $i\geq 3$.
\end{itemize}
\end{thm}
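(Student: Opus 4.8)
The plan is to compute $\HH^i(\nabla A)$ directly from a projective bimodule resolution of $\nabla A$, exploiting the fact that $\nabla A$ is a finite-dimensional algebra given by an explicit quiver with relations. Since $\nabla A$ has global dimension $2$ by \thmref{thm:MM}, its minimal projective bimodule resolution has the form
\[
0 \to P_2 \to P_1 \to P_0 \to \nabla A \to 0,
\]
where $P_0 = \bigoplus_{i} \nabla A e_i \otimes_k e_i \nabla A$ runs over vertices, $P_1$ over the arrows $x_j, y_j$ ($1\le j\le 3$), and $P_2$ over the two defining relations. Applying $\Hom_{(\nabla A)^e}(-, \nabla A)$ and taking cohomology reduces the computation of $\HH^\bullet(\nabla A)$ to the cohomology of an explicit three-term complex of vector spaces whose terms are direct sums of the spaces $e_i (\nabla A) e_j$. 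Because $\nabla A$ is finite-dimensional and triangular, each $e_i(\nabla A)e_j = A_{j-i}$ for $i\le j$ and vanishes otherwise, so all the spaces involved are small and completely explicit.

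First I would write down the quiver and the two relations $r_1 = x_1x_2y_3-\be y_1x_2x_3 -\al x_1y_2x_3$ and $r_2 = x_1y_2y_3-\be y_1y_2x_3 -\al y_1x_2y_3$ explicitly, together with the differentials in the bimodule resolution: the map $P_1 \to P_0$ sends an arrow $a\colon i\to j$ to $e_i\otimes a - a\otimes e_j$, and the map $P_2 \to P_1$ is obtained by the (left and right) "partial derivative" expansions of $r_1, r_2$ with respect to each arrow, as in the standard construction for path algebras modulo relations (Bardzell/Happel). Next I would dualize, obtaining the Hochschild complex
\[
0 \to \prod_i e_i(\nabla A)e_i \xrightarrow{\ d^0\ } \prod_{a\colon i\to j} e_i(\nabla A)e_j \xrightarrow{\ d^1\ } \prod_{k=1}^{2} e_{s(r_k)}(\nabla A)e_{t(r_k)} \to 0,
\]
and then compute $\HH^0 = \Ker d^0$, $\HH^1 = \Ker d^1/\Im d^0$, $\HH^2 = \Coker d^1$, and $\HH^i = 0$ for $i\ge 3$. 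The claim $\dim_k\HH^0=1$ is immediate since $\nabla A$ is connected, so $\Ker d^0 = Z(\nabla A) = k$.

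The main computational obstacle, and where the case division on $\al$ and $\al^2+4\be$ enters, is evaluating the rank of $d^1$ (equivalently the dimensions of $\Ker d^1$ and $\Coker d^1$): the matrix entries of $d^1$ are linear in $\al, \be$, and its rank drops precisely on the loci $\al = 0$ and $\al^2+4\be = 0$. Concretely I expect that after choosing bases for the $A_j$ (using the normal form of monomials in the down-up algebra, where $\dim_k A_j$ grows linearly), $d^1$ becomes a block matrix whose degeneration is controlled by the eigenvalue structure of $\left(\begin{smallmatrix}\al & 1\\ \be & 0\end{smallmatrix}\right)$, whose characteristic polynomial is $t^2-\al t-\be$ with discriminant $\al^2+4\be$; the three regimes ($\al=0$ giving extra symmetry, repeated eigenvalues when $\al^2+4\be=0$, distinct eigenvalues otherwise) are exactly the three rows of the stated table. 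I would carry out the rank computation regime by regime, tabulating $\dim\Ker d^1$ and $\dim\Coker d^1$, and check in each case that $\dim\HH^1 = \dim\Ker d^1 - \dim\Im d^0 = \dim\Ker d^1 - (\dim\nabla A_{\mathrm{diag\,part}} - 1)$ and $\dim\HH^2 = \dim\Coker d^1$ match the asserted values; the consistency of the alternating sum $\sum_i (-1)^i \dim\HH^i$ with the Euler form of $\nabla A$ provides a useful arithmetic check. Alternatively, and more in the spirit of the cited work of Belmans, one may instead invoke \thmref{thm:MM} to replace $\HH^\bullet(\nabla A)$ by $\HH^\bullet(\tails A)$ and read off the answer from the classification of point schemes of $3$-dimensional quadratic AS-regular algebras; the three cases for the point scheme ($\P^1\times\P^1$, a double $(1,1)$-curve, or two $(1,1)$-curves in general position) correspond to $\al=0$, $\al^2+4\be=0$ with $\al\ne0$, and $\al^2+4\be\ne0$ with $\al\ne0$ respectively, and \cite[Table 2]{Bel} then yields the dimensions directly.
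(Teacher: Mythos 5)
Your closing alternative is in fact exactly how the paper treats this statement: Theorem~\ref{thm:Bel} is not proved by computation in the paper at all, but is imported from Belmans via the classification of point schemes of $3$-dimensional cubic AS-regular algebras ($\P^1\times\P^1$ when $\al=0$, a double $(1,1)$-curve when $\al\neq 0$ and $\al^2+4\be=0$, two $(1,1)$-curves in general position otherwise), with the dimensions read off from \cite[Table 2]{Bel}. So that route is complete modulo the citation and coincides with the paper's argument.

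Your primary route --- minimal projective bimodule resolution, dualization, rank computation --- is the method the paper actually carries out, but only in Section~2 for the case $\deg y=n\geq 2$, precisely because the point-scheme technique is unavailable there. The setup you describe is correct for $n=1$ as well: three projective terms indexed by the $4$ vertices, $6$ arrows, and $2$ relations; $\HH^0=k$ by connectedness; vanishing in degrees $\geq 3$ since $\gldim\nabla A=2$. The gap is that, as written, the argument stops exactly where the content of the theorem lies: you never compute $\rank d^1$. With $\dim_k A_0,\dots,A_3=1,2,4,6$ one gets $\dim_k\widehat{P^0}=4$, $\dim_k\widehat{P^1}=12$, $\dim_k\widehat{P^2}=12$, so the Euler-characteristic check you propose only forces $1-\dim_k\HH^1+\dim_k\HH^2=4$, which is consistent with all three rows of the table and distinguishes none of them. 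The asserted values are equivalent to $\rank d^1=3,6,8$ in the three regimes, and your claim that the rank drops exactly on the loci $\al=0$ and $\al^2+4\be=0$, by exactly these amounts, is stated rather than established; to close this you would have to write out the $12\times 12$ matrix of $\widehat{\partial^2}$ and row-reduce it case by case, as the paper does with its matrices $L_1$ and $L_2$ for $n\geq 2$. As it stands the proposal succeeds only on the strength of the fallback citation.
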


It is natural to ask what happens when $A$ is not generated in degree 1,
i.e., how the structure of $\HH^i(\nabla A)$ depends on the grading of $A$.
(If $A$ is a graded down-up algebra with $\deg x=m,  \deg y=n$ such that $\gcd(m,n)=r$, then
the $r$-th Veronese algebra $A^{(r)}$ is a graded down-up algebra with $\deg x=m/r,  \deg y=n/r$,
and $\HH^i(\nabla A) \cong \HH^i(\nabla A^{(r)})^r$ by \cite[Theorem 9.1.8(1)]{Wei},
so it is enough to consider the case that $\gcd(\deg x, \deg y) = 1$.)
In this paper, we devote to compute $\HH^i(\nabla A)$ when $A$ is a graded down-up algebra with $\deg x=1, \deg y=n\geq 2$
(so that $\gcd(\deg x, \deg y) = 1$ and $\deg y$ is a multiple of $\deg x$).
We will show the following theorem.

\begin{thm} \label{thm:main}
Let $A=A(\al, \be)$ be a graded down-up algebra with $\deg x= 1, \deg y=n\geq 2$, and $\be \neq 0$.
We define
\[ \de_n := 
\begin{pmatrix} 1 &0 \end{pmatrix}\begin{pmatrix} \al &1 \\ \be &0 \end{pmatrix}^n\begin{pmatrix} 1 \\ 0 \end{pmatrix} \in k
\]
{\rm (}e.g. $\de_2=\al^{2}+\be, \de_3 = \al^3+2\al\be, \de_4=\al^4+3\al^2\be+\be^2, \de_5=\al^5+4\al^3\be+3\al\be^2${\rm )}.
Then
\begin{itemize}
\item $\dim_k \HH^0(\nabla A)=1;$
\item 
$\dim_k \HH^1(\nabla A)=
\begin{cases}
4 \quad \text{if}\ n\ \text{is odd and}\ \al =0\ (\text{in this case $\de_n =0$}),\\
3 \quad \text{if}\ n\ \text{is odd}, \al \neq 0, \text{and}\ \de_n=0, \text{or if}\ n\ \text{is even and}\ \de_n=0,\\
2 \quad \text{if}\ \al^2+4\be=0\ (\text{in this case $\de_n \neq 0$}),\\
1 \quad \text{if}\ \de_n\neq 0\ \text{and}\ \al^2+4\be\neq 0;
\end{cases}$
\item $\dim_k \HH^2(\nabla A)=
\begin{cases}
8 \quad \text{if}\ n=2\ \text{and}\ \de_2=0,\\
7 \quad \text{if}\ n=2\ \text{and}\ \al^2+4\be=0\ (\text{in this case $\de_2\neq 0$}),\\
6 \quad \text{if}\ n=2, \de_2\neq 0, \text{and}\ \al^2+4\be\neq 0,\\
n+5 \quad  \text{if}\ n\ \text{is odd and}\ \al =0 \; (\text{in this case $\de_n= 0$}),\\
n+4 \quad \text{if}\ n\ \text{is odd}, \al \neq 0, \text{and}\ \de_n=0, \text{or if}\ n\geq 4\ \text{is even and}\ \de_n=0, \\
n+3 \quad \text{if}\ n\geq 3\ \text{and}\ \al^2+4\be=0\ (\text{in this case $\de_n\neq 0$}),\\
n+2 \quad \text{if}\ n\geq 3, \de_n\neq 0, \text{and}\ \al^2+4\be\neq 0;
\end{cases}$
\item $\dim_k \HH^i(\nabla A)= 0$ for $i\geq 3$.
\end{itemize}
\end{thm}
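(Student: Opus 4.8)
The plan is to compute $\HH^i(\nabla A)$ directly from a projective bimodule resolution of $\nabla A$. Since $\nabla A$ is a finite-dimensional algebra given by a quiver with two relations on a linear $A_\infty$-shaped underlying graph, the natural starting point is to build the minimal projective resolution of $\nabla A$ as a $\nabla A$-bimodule. Because $\nabla A$ has global dimension $2$ (by \thmref{thm:MM}), this resolution has the form
\[
0 \longrightarrow P_2 \longrightarrow P_1 \longrightarrow P_0 \longrightarrow \nabla A \longrightarrow 0,
\]
where $P_0 = \bigoplus_i \nabla A e_i \otimes e_i \nabla A$, $P_1$ is built from the arrows, and $P_2$ is built from the two defining relations (suitably positioned along the quiver). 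First I would write down this resolution explicitly, keeping careful track of the grading coming from $\deg y = n$: the two down-up relations, which in the $\deg x=\deg y=1$ case sit in a single degree, now spread across several vertices, and the positions of the arrows $x_i$ (a ``short'' arrow $i \to i+1$) versus $y_i$ (a ``long'' arrow $i \to i+n$) differ. Applying $\Hom_{\nabla A \text{-}\nabla A}(-, \nabla A)$ to $P_\bullet$ and computing cohomology then reduces everything to linear algebra over $k$.

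**Key steps in order.** (1) Record the quiver and relations of $\nabla A$ for $\deg x = 1$, $\deg y = n$, with $\ell = 2(1+n)$ vertices; identify exactly which arrows exist and in which ``interval'' of vertices each of the two relations is supported. (2) Construct the minimal bimodule resolution $0 \to P_2 \to P_1 \to P_0 \to \nabla A \to 0$ and apply $\Hom(-,\nabla A)$ to get a complex $C^0 \to C^1 \to C^2$ of $k$-vector spaces whose dimensions are, respectively, $\sum_i \dim e_i \nabla A e_i$, $\sum_{\text{arrows } a: i\to j} \dim e_j \nabla A e_i$, and $\sum_{\text{relations } r: i \to j} \dim e_j \nabla A e_i$. (3) Compute these three dimensions explicitly as functions of $n$ — this is a bookkeeping exercise in counting paths in $\nabla A$, equivalently counting monomials in $x,y$ of the appropriate bidegree that survive modulo the relations; the answer will involve the structure constants of $A$ in low degrees, which is where $\al, \be$ enter. (4) Compute the ranks of the two differentials $d^1: C^0 \to C^1$ and $d^2: C^1 \to C^2$; the differentials are built from commutators with the arrows and from multiplying by the relation-defining elements, so their matrices have entries that are polynomials in $\al, \be$. (5) Case-split on the vanishing of the relevant polynomials — $\de_n$ (which is exactly the $(1,1)$-entry computed from the $n$-th power of the companion-type matrix, governing whether the long arrow $y$ composed appropriately hits a relation) and $\al^2 + 4\be$ (the discriminant governing whether the ``point scheme'' degenerates, equivalently whether $A$ has repeated structure) — and read off $\dim \HH^i = \dim C^i - \rk d^i - \rk d^{i+1}$.

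**Main obstacle.** The hard part will be step (4): pinning down the exact rank of the differential $d^2: C^1 \to C^2$, and to a lesser extent $d^1$, as the parameters $\al, \be$ vary. The entries are polynomial in $\al, \be$, and the rank drops precisely on certain subvarieties; identifying that these subvarieties are cut out exactly by $\de_n = 0$ and by $\al^2 + 4\be = 0$ (and no others) requires understanding how powers of $\left(\begin{smallmatrix}\al & 1\\ \be & 0\end{smallmatrix}\right)$ control the multiplication in high degrees of $A$ — essentially a Chebyshev/Fibonacci-type recursion, since $\de_n$ satisfies $\de_n = \al\de_{n-1} + \be\de_{n-2}$. The appearance of $\al^2+4\be$ is natural as the discriminant of the characteristic polynomial $t^2 - \al t - \be$ of that matrix: when it vanishes the matrix is non-diagonalizable and an extra Jordan-block degeneracy produces the jump in $\HH^1$ and $\HH^2$. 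I also expect a genuinely special phenomenon at $n=2$: the two relations then sit close enough together on the quiver that $C^2$ (and possibly $C^1$) is larger than the stable $n \geq 3$ pattern, which is why the $\HH^2$ formula has a separate $n=2$ regime; handling that boundary case requires redoing the path count by hand rather than appealing to the generic-$n$ formula. Finally, the vanishing $\HH^i(\nabla A) = 0$ for $i \geq 3$ is immediate from $\gldim \nabla A = 2$, so no work is needed there.
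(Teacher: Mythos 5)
Your proposal follows essentially the same route as the paper: a minimal projective bimodule resolution of length $2$ (via Green--Snashall), application of $\Hom_{\La^{\rm e}}(-,\La)$, dimension counts of the resulting complex (with the $n=2$ anomaly in $\widehat{P^2}$ you correctly anticipate), and a rank computation for $\widehat{\partial^2}$ that splits into exactly the cases governed by $\de_n$ and $\al^2+4\be$. The only substance not yet supplied is the explicit row reduction of the two blocks of the differential matrix, which is where the paper's Lemmas on $L_1$ and $L_2$ do the real work, but your plan locates that work correctly.
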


Note that it is crucial for this result that $\deg y$ is a multiple of $\deg x$; see \remref{rem:L2}.
Since $A$ is not generated in degree 1,
the geometric theory of point schemes does not work naively in our case,
so our proof of \thmref{thm:main} is purely algebraic.

It is known that Hochschild cohomology is invariant under derived equivalence.
Using \thmref{thm:MM}, we have the following consequence.

\begin{cor}
Let $A=A(\al, \be)$ and $A'=A(\al', \be')$ be graded down-up algebras with $\deg x= 1, \deg y=n\geq 1$, where $\be \neq 0, \be'\neq 0$.
If
\[
\de_n = \begin{pmatrix} 1 &0 \end{pmatrix}\begin{pmatrix} \al &1 \\ \be &0 \end{pmatrix}^n\begin{pmatrix} 1 \\ 0 \end{pmatrix} =0
\ \ \text{and}\ \
\de'_n = \begin{pmatrix} 1 &0 \end{pmatrix}\begin{pmatrix} \al' &1 \\ \be' &0 \end{pmatrix}^n\begin{pmatrix} 1 \\ 0 \end{pmatrix} \neq 0,
\]
then $\Db(\tails A) \ncong \Db(\tails A')$. 
\end{cor}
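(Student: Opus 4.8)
The plan is to translate the non-equivalence of the derived categories of the noncommutative projective schemes into an inequality between dimensions of Hochschild cohomology groups of finite-dimensional algebras. First I would invoke \thmref{thm:MM}: it supplies $k$-linear triangle equivalences $\Db(\tails A) \cong \Db(\mod \nabla A)$ and $\Db(\tails A') \cong \Db(\mod \nabla A')$. Consequently, if $\Db(\tails A)$ and $\Db(\tails A')$ were equivalent as triangulated categories, then $\Db(\mod \nabla A) \cong \Db(\mod \nabla A')$; since $\nabla A$ and $\nabla A'$ are finite-dimensional $k$-algebras and Hochschild cohomology is invariant under derived equivalence, this would force $\HH^i(\nabla A) \cong \HH^i(\nabla A')$, and in particular $\dim_k \HH^i(\nabla A) = \dim_k \HH^i(\nabla A')$, for every $i \geq 0$.

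Next I would read off $\dim_k \HH^1(\nabla A)$ from the explicit formulas: \thmref{thm:Bel} for $n = 1$ (where one computes $\de_1 = \al$, so that the condition $\al = 0$ appearing there is exactly $\de_1 = 0$) and \thmref{thm:main} for $n \geq 2$. The key observation is that, for each fixed $n$, the set of values taken by $\dim_k \HH^1(\nabla A)$ over the locus $\de_n = 0$ is disjoint from the set of values taken over the locus $\de_n \neq 0$: when $n = 1$ one gets $6$ versus $\{1,3\}$, and when $n \geq 2$ one gets a subset of $\{3,4\}$ versus $\{1,2\}$. Hence $\dim_k \HH^1(\nabla A)$ alone already detects whether $\de_n$ vanishes.

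Finally, under the hypotheses $\de_n = 0$ and $\de'_n \neq 0$ of the corollary, the previous paragraph gives $\dim_k \HH^1(\nabla A) \neq \dim_k \HH^1(\nabla A')$, hence $\HH^1(\nabla A) \not\cong \HH^1(\nabla A')$. By the first paragraph this is impossible if $\Db(\tails A) \cong \Db(\tails A')$; therefore $\Db(\tails A) \ncong \Db(\tails A')$, as claimed.

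There is no serious obstacle here: the corollary is a formal consequence of \thmref{thm:main} (together with \thmref{thm:Bel}) and \thmref{thm:MM}, all the substantive work being in the computation of \thmref{thm:main}. The only mild points requiring care are (i) noting that \thmref{thm:MM} yields a $k$-linear equivalence, so that the $k$-algebra-dependent invariant Hochschild cohomology is genuinely preserved, and (ii) treating $n = 1$ via \thmref{thm:Bel} separately from the case $n \geq 2$, since \thmref{thm:main} is stated only for $n \geq 2$.
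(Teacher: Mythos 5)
Your argument is correct and is essentially the paper's own (the paper gives no explicit proof, merely noting that Hochschild cohomology is a derived invariant and citing \thmref{thm:MM}): one passes to $\nabla A$, uses invariance of $\HH^\bullet$ under derived equivalence, and observes from \thmref{thm:Bel} and \thmref{thm:main} that $\dim_k\HH^1(\nabla A)$ separates the loci $\de_n=0$ and $\de_n\neq 0$. Your explicit check that $\de_1=\al$, so the $n=1$ case is covered by \thmref{thm:Bel}, is a worthwhile detail the paper leaves implicit.
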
 

In the last section, we will apply our results to the study of Grothendieck groups.
For a graded down-up algebra $A=A(\al, \be)$ with $\deg x= 1, \deg y=n$, and $\be \neq 0$,
we can observe that if $n=1, 2$, then $\Db(\tails A)$ behaves a bit like a geometric object (a smooth projective surface),
but if $n\geq 3$, then $\Db(\tails A)$  is not equivalent to the derived category of any smooth projective surface (\propref{prop:K}).

\section{Proof of \thmref{thm:main}}

In this section, we present the proof of \thmref{thm:main}.
Throughout this section,
let $A=A(\al, \be)$ be a graded down-up algebra with $\deg x= 1, \deg y=n\geq 2$, and $\be \neq 0$.
Then $\La := \nabla A$ is given as the quotient of the path algebra of the quiver
\begin{align*}
\cQ:=
\xymatrix@C=2pc@R=2pc{
1 \ar@<0.5ex>[r]^(0.35){x_1} \ar@/^{-20pt}/@{->}[rrrr]_{y_{1}} 
&2 \ar@<0.5ex>[r]^(0.35){x_2} \ar@/^{-20pt}/@{->}[rrrr]_{y_{2}} 
&\cdots \ar@<0.5ex>[r]^(0.5){x_{n-1}}  
&n \ar@<0.5ex>[r]^(0.35){x_n} \ar@/^{-20pt}/@{->}[rrrr]_{y_{n}}
&n+1 \ar@<0.5ex>[r]^(0.5){x_{n+1}} \ar@/^{-20pt}/@{->}[rrrr]_{y_{n+1}} 
&n+2 \ar@<0.5ex>[r]^(0.5){x_{n+2}} \ar@/^{-20pt}/@{->}[rrrr]_{y_{n+2}} 
&\cdots \ar@<0.5ex>[r]^(0.5){x_{2n-1}}
&2n \ar@<0.5ex>[r]^(0.35){x_{2n}}
&2n+1 \ar@<0.5ex>[r]^(0.5){x_{2n+1}} 
&2n+2
}
\end{align*}
modulo the ideal generated by the relations
\begin{align*}
f_i &:= x_ix_{i+1}y_{i+2}-\be y_ix_{i+n}x_{i+n+1} -\al x_iy_{i+1}x_{i+n+1} \ \ (1 \leq i\leq n),\\
g &:= x_1y_2y_{n+2}-\be y_1y_{n+1}x_{2n+1} -\al y_1x_{n+1}y_{n+2}.
\end{align*}
Note that the arrows $x_i, y_j$ in $\cQ$ come from the generators $x,y$ of $A$, respectively,
and moreover, the relations $f_i, g$ come from the defining relations $x^2y-\be yx^2 -\al xyx, xy^2-\be y^2x -\al yxy$ of $A$, respectively. 

Let $\La^{\rm e}:=\La^{\op}\otimes \La$ be the enveloping algebra of $\La$.
Then $\La$ is a $\La$-bimodule, or equivalently, a right $\La^{\rm e}$-module.
For $i \geq 0$, the $i$-th Hochschild cohomology group $\HH^{i}(\La)$ of $\La$ is defined by 
\[
 \HH^{i}(\La):=\Ext_{\La^{\rm e}}^{i}(\La,\La).
\] 
It is known that $\HH^{0}(\La)$ coincides with the center of $\La$.
Since $\cQ$ is connected and has no oriented cycles, we have
\begin{align} \label{HH0}
\HH^0(\La)= k
\end{align}
(by a similar argument as in the proof of \cite[Lemma 4.4]{LWZ}).

To compute $\HH^{i}(\La)$ for $i \geq 1$, we first construct a minimal projective resolution of $\La$ as a right $\La^{\rm e}$-module
by using Green-Snashall's method \cite[Section 2]{GS}.
We define the sets $\cG^{i} \subset k\cQ$ for $i=0, 1, 2$ by
\[\cG^{0}:=\{e_{1},\ldots, e_{2n+2}\}, \quad  \cG^{1}:=\{x_{1},\ldots,x_{2n+1},y_{1},\ldots,y_{n+2} \}, \quad \cG^{2}:=\{f_{1},\ldots,f_{n},g \}.\]
We set 
\[ P^{i}:=\bigoplus_{h \in \cG^{i}}\La s(h)\otimes t(h)\La \quad \text{for}\ i=0,1,2\]
where $s(h)$ is the vertex at which $h$ starts (source) and $t(h)$ is the vertex at which $h$ ends (target).
Then each $P^{i}$ is a projective right $\La^{\rm e}$-module.

We define $\partial^{0}:P^{0}\rightarrow \La$ to be the multiplication map, and
$\partial^{1}:P^{1}\rightarrow P^{0}$ to be the right $\La^{\rm e}$-homomorphism determined by 
\begin{align*}
\begin{cases}
s(x_i)\otimes t(x_i) \longmapsto (s(e_i)\otimes t(e_{i}))x_i-x_i(s(e_{i+1})\otimes t(e_{i+1})),
\\
s(y_j)\otimes t(y_j) \longmapsto (s(e_j)\otimes t(e_{j}))y_j-y_j(s(e_{j+n})\otimes t(e_{j+n}))
\end{cases}
\end{align*}
for all $1\leq i\leq 2n+1, 1\leq j\leq n+2$. 
We also define $\partial^{2}:P^{2}\rightarrow P^{1}$ to be the right $\La^{\rm e}$-homomorphism determined by 
\begin{align*}
\begin{cases}
s(f_i)\otimes t(f_i)\longmapsto\\
\qquad (s(x_i)\otimes t(x_i))x_{i+1}y_{i+2}  +  x_i(s(x_{i+1})\otimes t(x_{i+1}))y_{i+2}  + x_ix_{i+1}(s(y_{i+2})\otimes t(y_{i+2}))\\
\qquad  -\be((s(y_{i})\otimes t(y_{i}))x_{i+n}x_{i+n+1}  +  y_i(s(x_{i+n})\otimes t(x_{i+n}))x_{i+n+1}  +  y_ix_{i+n}(s(x_{i+n+1})\otimes t(x_{i+n+1})))\\
\qquad  -\al((s(x_{i})\otimes t(x_{i}))y_{i+1}x_{i+n+1}  +  x_i(s(y_{i+1})\otimes t(y_{i+1}))x_{i+n+1}  +  x_iy_{i+1}(s(x_{i+n+1})\otimes t(x_{i+n+1}))),\\
s(g)\otimes t(g)\longmapsto\\
\qquad (s(x_{1})\otimes t(x_{1}))y_2y_{n+2}  +  x_1(s(y_{2})\otimes t(y_{2}))y_{n+2}  +  x_1y_2(s(y_{n+2})\otimes t(y_{n+2}))\\
\qquad -\be ((s(y_{1})\otimes t(y_{1}))y_{n+1}x_{2n+1}  +  y_1(s(y_{n+1})\otimes t(y_{n+1}))x_{2n+1}  +  y_1y_{n+1}(s(x_{2n+1})\otimes t(x_{2n+1})))\\
\qquad -\al ((s(y_{1})\otimes t(y_{1}))x_{n+1}y_{n+2}  +  y_1(s(x_{n+1})\otimes t(x_{n+1}))y_{n+2}  +  y_1x_{n+1}(s(x_{n+2})\otimes t(x_{n+2})))\\
\end{cases} 
\end{align*}
for all $1\leq i\leq n$.

\begin{lem}
With the above definitions, the sequence
\begin{align} \label{mpr}
0\longrightarrow P^{2}\stackrel{\partial^{2}}{\longrightarrow}
P^{1}\stackrel{\partial^{1}}{\longrightarrow}
P^{0}\stackrel{\partial^{0}}{\longrightarrow} \La_{\La^{\rm e}}
\longrightarrow 0
\end{align}
forms a minimal projective resolution of $\La$ as a right $\La^{\rm e}$-module.
\end{lem}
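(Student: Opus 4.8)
The plan is to verify directly that \eqref{mpr} is a complex and that it is exact, and then observe that minimality follows because all differentials have entries in the arrow ideal (the radical) of $\La^{\rm e}$. Concretely, the first task is to check $\partial^{0}\partial^{1}=0$ and $\partial^{1}\partial^{2}=0$. The former is immediate: $\partial^{1}$ sends $s(x_i)\otimes t(x_i)$ to a commutator-type expression $e_i x_i - x_i e_{i+1}$ (in the bimodule notation), which multiplies to $x_i-x_i=0$ in $\La$, and similarly for the $y_j$. The latter, $\partial^{1}\partial^{2}=0$, is the telescoping identity: applying $\partial^{1}$ to each of the three terms coming from a product $abc$ of arrows produces $(\,\cdot a - a\,\cdot)bc + a(\,\cdot b - b\,\cdot)c + ab(\,\cdot c - c\,\cdot)$, which collapses to the single bimodule element representing ``left multiplication by $abc$ minus right multiplication by $abc$'', and then the defining relations $f_i=0$, $g=0$ guarantee that the sum over the three monomials in each of $f_i$ and $g$ cancels. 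This is the standard computation behind Green--Snashall's construction \cite[Section 2]{GS}, and I would present it compactly rather than term by term.

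Next, I would establish exactness. Since $\partial^{0}$ is the multiplication map of a connected quiver algebra, $\Ker\partial^{0}$ is generated as a bimodule by the elements $s(a)\otimes t(a)\,a - a\,s(t(a))\otimes t(t(a))$ over arrows $a$, which is exactly $\Im\partial^{1}$; this is a general fact for path algebras modulo relations. Exactness at $P^{1}$ amounts to showing that the relations $\cG^{2}=\{f_1,\dots,f_n,g\}$ generate all the bimodule syzygies among the arrows, i.e. that $\{f_i,g\}$ is a (minimal) set of relations with no further relations among them — equivalently that $\La$ has global dimension (at most) $2$ and that the second syzygy is free on $\cG^{2}$. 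Here I would invoke the AS-regularity of $A$ of dimension $3$ (\thmref{thm:as}), hence $\gldim \nabla A = 2$ by \thmref{thm:MM} (extremely Fano of global dimension $2$); this forces the projective resolution of $\La$ over $\La^{\rm e}$ to have length exactly $2$, so $\partial^{2}$ is injective and its image is all of $\Ker\partial^{1}$. Alternatively, and more in the spirit of an ``elementary'' proof, one can count dimensions: comparing the graded (by path length) dimensions of $P^{0},P^{1},P^{2}$ against those of $\La$ using the bigrading of $\nabla A$ shows the alternating sum is correct, and since the complex is already known to be exact except possibly at $P^{1}$ and $P^{2}$, a Hilbert-series argument pins down exactness there.

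Minimality is then automatic: every matrix entry of $\partial^{1}$ and $\partial^{2}$ lies in $\La^{\rm e}\operatorname{rad}$ because each is a $k$-linear combination of elements $s(h)\otimes t(h)$ multiplied on at least one side by a nontrivial path (no term is a scalar times an idempotent times an idempotent), so $\partial^{i}\otimes_{\La^{\rm e}} (\La/\operatorname{rad})^{\rm e}=0$, which is the definition of minimality.

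The main obstacle is the exactness at $P^{1}$, i.e. proving that $f_1,\dots,f_n,g$ really do generate \emph{all} relations and that there are no ``hidden'' higher syzygies — in other words pinning down that the resolution stops at step $2$ and that $\partial^{2}$ is injective with image exactly $\Ker\partial^{1}$. The cleanest route is to cite AS-regularity/Fano-ness to get $\gldim\La=2$ for free, but if one wants a self-contained argument one must do the syzygy bookkeeping by hand, tracking the overlap ambiguities of the (noncommutative) relations $f_i,g$ and checking each resolves to zero; this is where the specific combinatorics of the weights $(1,n)$ enters and where the computation is least automatic.
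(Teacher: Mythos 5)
Your overall strategy is the same as the paper's, whose proof is two lines: Green--Snashall's \cite[Theorem 2.9]{GS} gives, by construction, that $P^{2}\to P^{1}\to P^{0}\to\La\to 0$ is the beginning of a minimal projective resolution of $\La$ over $\La^{\rm e}$, and then $\gldim\La=2$ (\thmref{thm:MM}) together with \cite[Lemma 1.5]{H} gives $\pd_{\La^{\rm e}}\La=2$, so the third term of the minimal resolution vanishes and $\partial^{2}$ is injective. Your verification that the sequence is a complex and your argument for minimality (all entries of the differentials lie in the radical) are fine and are implicit in the Green--Snashall setup.

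The one place your logic slips is in what $\gldim\La=2$ actually buys you. You write that it ``forces the projective resolution of $\La$ over $\La^{\rm e}$ to have length exactly $2$, so $\partial^{2}$ is injective and its image is all of $\Ker\partial^{1}$.'' The first conclusion is correct (via Happel's lemma), but the second does not follow from the global dimension: knowing that the minimal resolution has length $2$ says nothing about whether \emph{this particular} $\partial^{2}$ surjects onto $\Ker\partial^{1}$. That surjectivity is precisely the assertion that $\{f_1,\dots,f_n,g\}$ is a minimal generating set of the relation ideal and that the ``formally differentiated'' map $\partial^{2}$ covers the bimodule syzygies of the arrows --- which is the content of \cite[Theorem 2.9]{GS}, or, if one insists on a self-contained argument, of the overlap-ambiguity bookkeeping you mention at the end. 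So the correct division of labour is: Green--Snashall for exactness at $P^{0}$ and at $P^{1}$ and for minimality; $\gldim\La=2$ only for the vanishing of the next syzygy, i.e.\ the injectivity of $\partial^{2}$. You correctly identify exactness at $P^{1}$ as the main obstacle, but the shortcut you propose for it (citing the global dimension) does not close it; you need the citation to \cite{GS} (or the by-hand syzygy computation) at that step.
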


\begin{proof}
By construction, it follows from \cite[Theorem 2.9]{GS} that
$P^{2}\stackrel{\partial^{2}}{\longrightarrow} P^{1}\stackrel{\partial^{1}}{\longrightarrow} P^{0}\stackrel{\partial^{0}}{\longrightarrow}
\La \longrightarrow 0$ forms part of a minimal projective resolution.
Since $\gldim \La=2$ by \thmref{thm:MM}, we see $\pd_{\La^{\rm e}}\La=2$ by \cite[Lemma 1.5]{H}, so the third term, $P^{3}$, is $0$ as required.
\end{proof}

By applying the functor $ \widehat{-} :=\Hom_{\La^{\rm e}}(-,\La)$ to (\ref{mpr}),
we have the Hochschild complex 
\[
0 \longrightarrow \widehat{P^{0}} \stackrel{\widehat{\partial^{1}}}{\longrightarrow} 
  \widehat{P^{1}} \stackrel{\widehat{\partial^{2}}}{\longrightarrow} 
  \widehat{P^{2}} \longrightarrow 0. 
\]
We next calculate a $k$-basis of $\widehat{P^{i}}$.
For $e_i \in \cG^{0}$, we define the right $\La^{\rm e}$-homomorphism $\ta_{e_i} :P^{0}\rightarrow \La$ by
\[
\ta_{e_i}(s(h)\otimes t(h))=
\begin{cases}
e_i& \text{if }h=e_i,\\
0    & \text{otherwise}
\end{cases}
\]
for $h \in \cG^{0}$.

For $x_i, y_j \in \cG^{1}$, we define the right $\La^{\rm e}$-homomorphisms $\ta_{x_i}, \ta_{y_j}, \ta_{y_j}^{x^n}: P^1 \to \La$ by
{\small
\[
\begin{array}{ll}
\text{$
\ta_{x_i}(s(h)\otimes t(h))=
\begin{cases}
x_i & \text{if }h=x_i,\\
0    & \text{otherwise},
\end{cases}
$} & \text{$
\ta_{y_j}(s(h)\otimes t(h))=
\begin{cases}
y_{j} & \text{if }h=y_j,\\
0    & \text{otherwise}, 
\end{cases}
$}
\\[5mm]
\text{$
\ta_{y_j}^{x^n}(s(h)\otimes t(h))=
\begin{cases}
x_j\cdots x_{j+n-1} & \text{if }h=y_j,\\
0    & \text{otherwise}
\end{cases}
$} &
\end{array}
\]
}for $h \in \cG^{1}$.

For $f_i$ and  $g \in \cG^{2}$, we define the right $\La^{\rm e}$-homomorphisms
\[
\ta_{f_i}^{x^{n+2}}, \ta_{f_i}^{yx^2}, \ta_{f_i}^{xyx},
\ta_{g}^{x^{2n+1}}, \ta_{g}^{yx^{n+1}}, \ta_{g}^{xyx^{n}}, \ta_{g}^{y^2x}, \ta_{g}^{yxy}: P^2  \to \La
\] by
{\small
\[
\begin{array}{ll}
\text{$
\ta_{f_i}^{x^{n+2}}(s(h)\otimes t(h))=
\begin{cases}
x_i\cdots x_{i+n+1} & \text{if }h=f_i,\\
0    & \text{otherwise},
\end{cases}
$} & \text{$
\ta_{f_i}^{yx^2}(s(h)\otimes t(h))=
\begin{cases}
y_{i}x_{i+n}x_{i+n+1} & \text{if }h=f_i,\\
0    & \text{otherwise}, 
\end{cases}
$}
\\[5mm]
\text{$
\ta_{f_i}^{xyx}(s(h)\otimes t(h))=
\begin{cases}
x_iy_{i+1}x_{i+n+1} & \text{if }h=f_i,\\
0    & \text{otherwise},
\end{cases}
$} & \text{$
\ta_{g}^{x^{2n+1}}(s(h)\otimes t(h))=
\begin{cases}
x_{1}\cdots x_{2n+1} & \text{if }h=g,\\
0    & \text{otherwise}, 
\end{cases}
$}
\\[5mm]
\text{$
\ta_{g}^{yx^{n+1}}(s(h)\otimes t(h))=
\begin{cases}
y_1x_{n+1}\cdots x_{2n+1} & \text{if }h=g,\\
0    & \text{otherwise}, 
\end{cases}
$} & \text{$
\ta_{g}^{xyx^{n}}(s(h)\otimes t(h))=
\begin{cases}
x_{1}y_{2}x_{n+2}\cdots x_{2n+1} & \text{if }h=g,\\
0    & \text{otherwise}, 
\end{cases}
$}
\\[5mm]
\text{$
\ta_{g}^{y^2x}(s(h)\otimes t(h))=
\begin{cases}
y_{1}y_{n+1}x_{2n+1} & \text{if }h=g,\\
0    & \text{otherwise}, 
\end{cases}
$} & \text{$
\ta_{g}^{yxy}(s(h)\otimes t(h))=
\begin{cases}
y_{1}x_{n+1}y_{n+2} & \text{if }h=g,\\
0    & \text{otherwise}
\end{cases}
$}
\end{array}
\]
}for $h \in \cG^{2}$.

When $n=2$, for $f_i \in \cG^{2}$, we additionally define the right $\La^{\rm e}$-homomorphism $\ta_{f_i}^{y^2}: P^2  \to \La$ by
\[ \ta_{f_i}^{y^2}(s(h)\otimes t(h))=
\begin{cases}
y_iy_{i+2} & \text{if }h=f_i,\\
0    & \text{otherwise}
\end{cases} \]
for $h \in \cG^{2}$.

\begin{lem} \label{lem:dim}
\begin{enumerate}[{\rm (1)}]
\item $\widehat{P^{0}}$ has a $k$-basis
$\{\ta_{e_{1}},\ldots, \ta_{e_{2n+2}}\}$, so $\dim_k \widehat{P^{0}} = 2n+2$.
\item $\widehat{P^{1}}$ has a $k$-basis
$\{\ta_{x_i}, \ta_{y_j}, \ta_{y_j}^{x^n} \mid 1\leq i\leq 2n+1,\; 1\leq j \leq n+2 \}$, so $\dim_k \widehat{P^{1}} = 4n+5$.
\item If $n=2$, then $\widehat{P^{2}}$ has a $k$-basis
$\{ \ta_{f_i}^{x^{4}}, \ta_{f_i}^{yx^2}, \ta_{f_i}^{xyx}, \ta_{f_i}^{y^2}, \ta_{g}^{x^{5}}, \ta_{g}^{yx^{3}}, \ta_{g}^{xyx^2}, \ta_{g}^{y^2x}, \ta_{g}^{yxy}
\mid 1\leq i\leq 2\}$, so $\dim_k \widehat{P^{2}} = 13$.
\item If $n\geq 3$, then $\widehat{P^{2}}$ has a $k$-basis
$\{ \ta_{f_i}^{x^{n+2}}, \ta_{f_i}^{yx^2}, \ta_{f_i}^{xyx}, \ta_{g}^{x^{2n+1}}, \ta_{g}^{yx^{n+1}}, \ta_{g}^{xyx^{n}}, \ta_{g}^{y^2x}, \ta_{g}^{yxy}
\mid 1\leq i\leq n\}$, so $\dim_k \widehat{P^{2}} = 3n+5$.
\end{enumerate}
\end{lem}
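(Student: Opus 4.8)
The plan is to reduce the whole statement to a dimension count for the graded pieces of the down-up algebra $A$. The starting point is the standard identification, for each $h$ in $\cG^{0}\cup\cG^{1}\cup\cG^{2}$, of $\Hom_{\La^{\rm e}}(\La s(h)\otimes t(h)\La,\La)$ with $s(h)\La t(h)=e_{s(h)}\La e_{t(h)}$ via $\phi\mapsto\phi(s(h)\otimes t(h))$; indeed any such $\phi$ is determined by that image, which must lie in $e_{s(h)}\La e_{t(h)}$, and conversely every element of $e_{s(h)}\La e_{t(h)}$ occurs. Under this identification $\widehat{P^{i}}\cong\bigoplus_{h\in\cG^{i}}e_{s(h)}\La e_{t(h)}$, and the maps $\ta_{h}^{\bullet}$ introduced before the lemma are precisely the preimages of the monomials written in their superscripts, read as paths in $\cQ$ (together with the tautological maps $\ta_{e_{i}},\ta_{x_{i}},\ta_{y_{j}}$). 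So it suffices to show that, for each $h$, the listed elements form a $k$-basis of $e_{s(h)}\La e_{t(h)}$.

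Next I would record two elementary facts. First, $\La=\nabla A$ has exactly $\ell=2(\deg x+\deg y)=2n+2$ vertices, and by the definition of the Beilinson algebra $e_{a}\La e_{b}\cong A_{b-a}$ for $1\le a\le b\le 2n+2$ while $e_{a}\La e_{b}=0$ otherwise; moreover each $x_{i}$ is an arrow $i\to i+1$, each $y_{j}$ is an arrow $j\to j+n$, each relation $f_{i}$ is a path $i\to i+n+2$, and $g$ is a path $1\to 2n+2$ (all visible from the definitions of $\partial^{1},\partial^{2}$). Second, by the PBW theorem for down-up algebras (\cite{BR, KMP}), $A$ has $k$-basis $\{\,y^{a}(xy)^{b}x^{c}:a,b,c\ge 0\,\}$, equivalently $H_{A}(t)=\big((1-t)(1-t^{n})(1-t^{n+1})\big)^{-1}$. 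Thus the problem becomes: for the relevant values $d\in\{0,1,n,n+2,2n+1\}$, list the PBW monomials of degree $d$ and match them with the superscripts occurring in the lemma.

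The matching is then routine bookkeeping. In degree $0$ the only PBW monomial is $1$, so $e_{i}\La e_{i}=ke_{i}$ and part~(1) follows with $\dim_{k}\widehat{P^{0}}=|\cG^{0}|=2n+2$. In degree $1$ the only PBW monomial is $x$ (using $n\ge 2$), so $e_{i}\La e_{i+1}=kx_{i}$; in degree $n$ the PBW monomials are $y$ and $x^{n}$, so $e_{j}\La e_{j+n}$ is spanned by $y_{j}$ and $x_{j}\cdots x_{j+n-1}$, giving part~(2) with $\dim_{k}\widehat{P^{1}}=(2n+1)\cdot 1+(n+2)\cdot 2=4n+5$. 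In degree $n+2$ the PBW monomials are $x^{n+2}$, $(xy)x=xyx$, $yx^{2}$ when $n\ge 3$, with the additional monomial $y^{2}=y^{2}(xy)^{0}x^{0}$ (which has degree $n+2$ only when $n=2$) when $n=2$; in degree $2n+1$ the PBW monomials are $x^{2n+1}$, $yx^{n+1}$, $y^{2}x$, $(xy)x^{n}=xyx^{n}$, $y(xy)=yxy$. Since these are exactly the superscripts of the maps $\ta_{f_{i}}^{\bullet}$ and $\ta_{g}^{\bullet}$ (for $n=2$, respectively for $n\ge 3$), parts~(3) and~(4) follow with $\dim_{k}\widehat{P^{2}}=2\cdot 4+5=13$ when $n=2$ and $\dim_{k}\widehat{P^{2}}=n\cdot 3+5=3n+5$ when $n\ge 3$.

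The one place that needs genuine care, and which I expect to be the main (if modest) obstacle, is making the matching precise: the superscript words such as $yxy$, $xyx^{n}$, $y^{2}$ are written as explicit monomials in $x,y$ rather than in PBW normal form, so one must either rewrite each of them into the PBW basis using the defining relations $x^{2}y=\be yx^{2}+\al xyx$ and $xy^{2}=\be y^{2}x+\al yxy$ (of degrees $n+2$ and $2n+1$) and check that the resulting change of basis is invertible in each degree, or simply observe directly that, up to bracketing, the listed words \emph{are} the PBW monomials of the relevant degree. This is also the step that forces the split between $n=2$ and $n\ge 3$, since $\dim_{k}A_{n+2}$ equals $4$ when $n=2$ but $3$ when $n\ge 3$, because $\deg(y^{2})=2n$ coincides with $n+2$ exactly when $n=2$.
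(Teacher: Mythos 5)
Your proposal is correct and follows essentially the same route as the paper: identify $\widehat{P^{i}}\cong\bigoplus_{h\in\cG^{i}}s(h)\La t(h)$ and then read off a basis of each $s(h)\La t(h)\cong A_{t(h)-s(h)}$. The paper leaves the final dimension count implicit, whereas you make it explicit via the PBW basis $\{y^{a}(xy)^{b}x^{c}\}$ and correctly note that the listed superscript words are already the PBW monomials in the relevant degrees, so no rewriting is needed; this is a welcome elaboration, not a different argument.
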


\begin{proof}
Since $\widehat{P^{i}} \cong \bigoplus_{h\in {\cG}^{i}} \Hom_{\La^{\rm e}}(\La s(h)\otimes t(h)\La, \La) \cong \bigoplus_{h \in\cG^{i}} s(h)\La t(h)$
as $k$-vector spaces,
if $\ta \in \widehat{P^{i}}$, then $\ta$ is given as $\sum_{h\in {\cG}^{i}} {\ta_h}$,
and $\ta_h(s(h)\otimes t(h))$ is described by a linear combination of basis elements of $s(h)\La t(h)$, so the result follows.
\end{proof}

We then compute a matrix presentation of $\widehat{\partial^{2}}$.
For $i \geq 1$, we define $a_i, b_i \in k$ by
\begin{align*}
\begin{pmatrix} a_i \\ b_i \end{pmatrix}
= \begin{pmatrix} \al &1 \\ \be &0 \end{pmatrix}^{i-1} \begin{pmatrix} 1 \\ 0 \end{pmatrix}.
\end{align*}

\begin{lem} \label{lem:seq}
For any $i\geq 1$, the equality
\begin{align*}
x^{i}y= b_i yx^{i} + a_ixyx^{i-1}
\end{align*}
holds in $A=A(\al,\be)$.
\end{lem}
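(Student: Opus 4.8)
The plan is to prove the identity by induction on $i$, using only the first defining relation $x^{2}y=\be yx^{2}+\al xyx$ of $A=A(\al,\be)$ together with the recursive description of the pair $(a_i,b_i)$ that is built into the matrix power. The case $\deg y=n$ plays no role in this lemma: the relation $x^2y-\be yx^2-\al xyx=0$ holds regardless of the grading, so the computation is purely about the algebra $A$.

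First I would dispose of the base case $i=1$: here $\binom{a_1}{b_1}=\binom{1}{0}$, so $a_1=1$ and $b_1=0$, and the asserted identity $x^{1}y=b_1yx+a_1xyx^{0}$ is just $xy=xy$. (As a sanity check one also recovers $i=2$: $a_2=\al$, $b_2=\be$, and the identity becomes exactly the relation $x^{2}y=\be yx^{2}+\al xyx$.) For the inductive step, assume $x^{i}y=b_iyx^{i}+a_ixyx^{i-1}$ for some $i\geq 1$. Multiplying this on the left by $x$ gives
\[
x^{i+1}y = b_i\,xyx^{i} + a_i\,x^{2}y\,x^{i-1},
\]
and substituting $x^{2}y=\be yx^{2}+\al xyx$ into the last term and regrouping yields
\[
x^{i+1}y = a_i\be\,yx^{i+1} + (b_i+a_i\al)\,xyx^{i}.
\]
Hence the coefficients attached to the exponent $i+1$ must be $a_{i+1}=\al a_i+b_i$ and $b_{i+1}=\be a_i$, that is, $\binom{a_{i+1}}{b_{i+1}}=\left(\begin{smallmatrix}\al&1\\\be&0\end{smallmatrix}\right)\binom{a_i}{b_i}$, which is precisely the defining recursion for $(a_{i+1},b_{i+1})$. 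This closes the induction.

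There is no serious obstacle here; the only points requiring care are the bookkeeping of the powers of $x$ (the relation is applied to the length‑two prefix $x^{2}$ of $x^{i+1}$, leaving the suffix $x^{i-1}$ untouched) and the observation that the linear recursion on $(a_i,b_i)$ forced by the relation is exactly the transfer matrix $\left(\begin{smallmatrix}\al&1\\\be&0\end{smallmatrix}\right)$ appearing in its definition. I would also note in passing that with this notation $\de_n=a_{n+1}$, which is the source of the role played by $\de_n$ in \thmref{thm:main}.
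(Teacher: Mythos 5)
Your proof is correct and follows essentially the same route as the paper's: induction on $i$, peeling off one factor of $x$ and rewriting via the relation $x^{2}y=\be yx^{2}+\al xyx$, so that the resulting recursion $a_{i+1}=\al a_i+b_i$, $b_{i+1}=\be a_i$ is exactly the one encoded by the matrix $\left(\begin{smallmatrix}\al&1\\\be&0\end{smallmatrix}\right)$. The only cosmetic difference is that the paper phrases the step as passing from $i-1$ to $i$ rather than from $i$ to $i+1$.
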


\begin{proof}
We prove this by induction.
The case $i=1$ is clear. Since
\begin{align*}
x^{i}y&= xx^{i-1}y= x(b_{i-1} yx^{i-1} + a_{i-1}xyx^{i-2})\\
&=b_{i-1}xyx^{i-1} + a_{i-1}(\be yx^2+\al xyx)x^{i-2}\\
&= \be a_{i-1} yx^i + (\al a_{i-1}+b_{i-1})xyx^{i-1},
\end{align*}
it follows that $a_i= \al a_{i-1} +b_{i-1}$ and $b_i = \be a_{i-1}$,
so we get the result.
\end{proof}

Let $L_1$ denote the $(2n+2) \times (3n+3)$ matrix 
{\small
\arraycolsep=2pt
\begin{align*}
&\left(\begin{array}{ccccccccc:ccccccccc:ccccccccccccccc}
\beta &0 &&&&&&&&0&0&&&&&&&& -\beta & -\beta & \beta &0 &&&&&&&& -\beta & \beta \\
\beta & \beta &&&&&&&& -\beta & -\beta & &&&&&&&& -\beta &0 &\beta  \\
\alpha &0 &&&&&&&& -\alpha &0 &&&&&&&&& -\alpha& \alpha &0 \\
 & \beta & \beta & &&&&&&& -\beta& -\beta & &&&&&&&& -\beta &0& \beta \\
 & \alpha &0 &&&&&&&& -\alpha&0 &&&&&&&&& -\alpha & \alpha&0 \\
 &&\beta & \beta & &&&&&&& -\beta& -\beta &&&&&&&&& -\beta &0& \beta \\
 && \alpha &0 &&&&&&&& -\alpha &0 &&&&&&&&& -\alpha & \alpha &0\\
 &&&\ddots&&&&&&&&&\ddots&&&&&&&&&&& \ddots \\
 &&&&& \beta & \beta &&&&&&&& -\beta & -\beta &&&&&&&&& -\beta &0& \beta \\
 &&&&& \alpha &0 &&&&&&&& -\alpha &0 &&&&&&&&& -\alpha & \alpha &0\\
 &&&&&& \ddots&&&&&&&&&\ddots&&&&&&&&&&& \ddots  \\
 &&&&&&&& \beta & \beta & &&&&&&& -\beta & -\beta & &&&&&&&&& -\beta &0& \beta \\
 &&&&&&&& \alpha&0 &&&&&&&& -\alpha&0 &&&&&&&&&& -\alpha & \alpha &0 \\
 &&&&&&&&0 &\alpha& &&&&&&&0& -\alpha& &&&&&&&&&0& -\alpha & \alpha \\
\end{array}\right)\\
&\hspace{3.35truecm} \text{\normalsize \rotatebox{90}{$\Rsh$} the $n$th column} \hspace{1.62truecm} \text{\normalsize \rotatebox{90}{$\Rsh$} the $2n$th column}
\end{align*}
}and let $L_2$ denote the $(n+2) \times (n+2)$ matrix 
{\Large
\[
\left(\begin{smallmatrix}
1&-\alpha &-\beta \\ 
&1&-\alpha &-\beta \\ 
&&1&-\alpha &-\beta \\
&&&&\ddots \\ 
&&&&&1&-\alpha &-\beta \\ 
&&&&&&1&-\alpha &-\beta \\
-\alpha &-\beta &&&&&& b_{n+1} & -(\beta b_n +\alpha b_{n+1}) \\
1& &&&&&& a_{n+1} & -(\beta a_n +\alpha a_{n+1})
\end{smallmatrix}\right)
\]
}where the blank entries represent zeros.

By Lemma \ref{lem:seq}, $-(\be b_n +\al b_{n+1}) = -(\be b_n +\al\be a_n) = -\be a_{n+1}$, so we see
\[
L_2={\Large
\left(\begin{smallmatrix}
1&-\alpha &-\beta \\ 
&1&-\alpha &-\beta \\ 
&&1&-\alpha &-\beta \\
&&&&\ddots \\ 
&&&&&1&-\alpha &-\beta \\ 
&&&&&&1&-\alpha &-\beta \\
-a_2 &-b_2 &&&&&& b_{n+1} & -\be a_{n+1} \\
a_1 &b_1&&&&&& a_{n+1} & -(\beta a_n +\alpha a_{n+1})
\end{smallmatrix}\right)}.
\]

\begin{lem} \label{lem:M2}
{\rm (1)} Assume that $n=2$. Let $\rho_1$ be the ordered basis $\{\ta_{x_1},\dots,\ta_{x_{5}}, \ta_{y_1},\dots,\ta_{y_{4}}, \ta_{y_{4}}^{x^2},\dots,\ta_{y_1}^{x^2}\}$ for $\widehat{P^{1}}$,
and let $\rho_2$ be the ordered basis $\{\ta_{g}^{y^2x}, \ta_{f_1}^{yx^2}, \ta_{g}^{yxy},\ta_{f_2}^{yx^2}, \ta_{f_1}^{xyx}, \ta_{f_2}^{xyx}, \ta_{f_2}^{x^{4}}, \ta_{f_1}^{x^{4}}, \ta_{g}^{yx^{3}}, \ta_{g}^{xyx^{2}}, \ta_{g}^{x^{5}},\ta_{f_1}^{y^{2}}, \ta_{f_2}^{y^{2}} \}$ for $\widehat{P^{2}}$.
Then the matrix representation $M_2$ of $\widehat{\partial^{2}}$ with respect to $\rho_1$ and $\rho_2$ is 
\begin{align*}
\left(
\begin{array}{ccc:ccc}
&& &&&\\
&\mbox{\smash{\Large $L_1$}}& &&&\\
&&  &&& \\ \hdashline
&&&&& \\
&&&&\mbox{\smash{\Large $L_2$}}& \\
&&&&&  \\ \hdashline
0&\cdots&0&0&\cdots&0\\
0&\cdots&0&0&\cdots&0\\
0&\cdots&0&0&\cdots&0
\end{array}
\right).
\end{align*}

{\rm (2)} Assume that $n\geq 3$. Let $\rho_1$ be the ordered basis $\{\ta_{x_1},\dots,\ta_{x_{2n+1}}, \ta_{y_1},\dots,\ta_{y_{n+2}}, \ta_{y_{n+2}}^{x^n},\dots,\ta_{y_1}^{x^n}\}$ for $\widehat{P^{1}}$,
and let $\rho_2$ be the ordered basis
\[
\{\ta_{g}^{y^2x}, \ta_{f_1}^{yx^2}, \ta_{g}^{yxy},\ta_{f_2}^{yx^2}, \ta_{f_1}^{xyx},\dots, \ta_{f_n}^{yx^2}, \ta_{f_{n-1}}^{xyx}, \ta_{f_n}^{xyx}, \ta_{f_n}^{x^{n+2}}, \dots, \ta_{f_1}^{x^{n+2}}, \ta_{g}^{yx^{n+1}}, \ta_{g}^{xyx^{n}}, \ta_{g}^{x^{2n+1}} \}
\]
for $\widehat{P^{2}}$.
Then the matrix representation $M_2$ of $\widehat{\partial^{2}}$ with respect to $\rho_1$ and $\rho_2$ is 
\begin{align} \label{mat}
\left(
\begin{array}{ccc:ccc}
&& &&&\\
&\mbox{\smash{\Large $L_1$}}& &&&\\
&&  &&& \\ \hdashline
&&&&& \\
&&&&\mbox{\smash{\Large $L_2$}}& \\
&&&&&  \\ \hdashline
0&\cdots&0&0&\cdots&0
\end{array}
\right). 
\end{align}
\end{lem}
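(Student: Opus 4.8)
The plan is to compute $\widehat{\partial^{2}}=\Hom_{\La^{\rm e}}(\partial^{2},\La)$ directly on the basis $\rho_{1}$ of $\widehat{P^{1}}$ from \lemref{lem:dim}. For $\ta\in\widehat{P^{1}}$ we have $\widehat{\partial^{2}}(\ta)=\ta\circ\partial^{2}$, and since $P^{2}$ is generated over $\La^{\rm e}$ by the elements $s(f_{i})\otimes t(f_{i})$ and $s(g)\otimes t(g)$, the map $\ta\circ\partial^{2}$ is determined by its values there. Expanding $\partial^{2}(s(f_{i})\otimes t(f_{i}))$ and $\partial^{2}(s(g)\otimes t(g))$ by their defining formulas and using $\ta(u\,(s(h)\otimes t(h))\,v)=u\,\ta(s(h)\otimes t(h))\,v$ for $h\in\cG^{1}$, each value becomes a $k$-linear combination of paths lying in $e_{i}\La e_{i+n+2}\cong A_{n+2}$, resp.\ in $e_{1}\La e_{2n+2}\cong A_{2n+1}$; expressing these in the bases $\{x^{n+2},yx^{2},xyx\}$ of $A_{n+2}$ (with $y^{2}$ adjoined when $n=2$) and $\{x^{2n+1},yx^{n+1},xyx^{n},y^{2}x,yxy\}$ of $A_{2n+1}$ already used in the proof of \lemref{lem:dim} — that is, in the bases dual to $\rho_{2}$ — reads off the columns of $M_{2}$.

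The block shape of $M_{2}$ comes for free from an auxiliary grading. Grade $A$ by the number of occurrences of $y$ (its ``$y$-weight''); both defining relations of $A$ are homogeneous for it, so each $A_{d}$ splits as $\bigoplus_{p}(A_{d})_{p}$, and the normal forms above give $(A_{n+2})_{0}=k\,x^{n+2}$, $(A_{n+2})_{1}=k\,yx^{2}\oplus k\,xyx$, and $(A_{n+2})_{2}=k\,y^{2}$ when $n=2$ (but $(A_{n+2})_{2}=0$ for $n\ge 3$), while $(A_{2n+1})_{0}=k\,x^{2n+1}$, $(A_{2n+1})_{1}=k\,yx^{n+1}\oplus k\,xyx^{n}$, $(A_{2n+1})_{2}=k\,y^{2}x\oplus k\,yxy$. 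Now $\ta_{x_{i}}$ and $\ta_{y_{j}}$ replace a hit arrow by itself, so on $s(f_{\ell})\otimes t(f_{\ell})$ they return a path of $y$-weight $1$ and on $s(g)\otimes t(g)$ a path of $y$-weight $2$; whereas $\ta_{y_{j}}^{x^{n}}$ replaces a $y$-arrow by $x_{j}\cdots x_{j+n-1}$, lowering the $y$-weight by one, so it returns weight $0$ on $s(f_{\ell})\otimes t(f_{\ell})$ and weight $1$ on $s(g)\otimes t(g)$. Hence $\widehat{\partial^{2}}(\ta_{x_{i}})$ and $\widehat{\partial^{2}}(\ta_{y_{j}})$ lie in the span of $\{\ta_{f_{\bullet}}^{yx^{2}},\ta_{f_{\bullet}}^{xyx},\ta_{g}^{y^{2}x},\ta_{g}^{yxy}\}$ (the rows of $L_{1}$), and $\widehat{\partial^{2}}(\ta_{y_{j}}^{x^{n}})$ lies in the span of $\{\ta_{f_{\bullet}}^{x^{n+2}},\ta_{g}^{yx^{n+1}},\ta_{g}^{xyx^{n}}\}$ (the rows of $L_{2}$); moreover no basis vector of $\widehat{P^{1}}$ can produce, on $s(g)\otimes t(g)$, a $y$-weight $0$ element of $A_{2n+1}$ (that needs two $y$-arrows replaced at once), nor on $s(f_{i})\otimes t(f_{i})$ a $y$-weight $2$ element of $A_{n+2}$ (none of them raises the weight), so the row $\ta_{g}^{x^{2n+1}}$, and for $n=2$ also $\ta_{f_{1}}^{y^{2}},\ta_{f_{2}}^{y^{2}}$, vanish identically. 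This is exactly the displayed $2\times 2$-block form.

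It remains to identify $L_{1}$ and $L_{2}$ entrywise. For $L_{2}$: evaluating $\ta_{y_{i+2}}^{x^{n}},\ta_{y_{i+1}}^{x^{n}},\ta_{y_{i}}^{x^{n}}$ on $s(f_{i})\otimes t(f_{i})$ returns $x_{i}\cdots x_{i+n+1}=x^{n+2}$ times $1,-\al,-\be$ respectively (coming from the leading, the $\al$- and the $\be$-terms of $f_{i}$), which produces the banded $[\,1,-\al,-\be\,]$ top part; evaluating $\ta_{y_{j}}^{x^{n}}$ on $s(g)\otimes t(g)$ for $j\in\{1,2,n+1,n+2\}$ returns combinations of the paths $x^{n+1}y$, $xyx^{n}$, $x^{n}yx$, $yx^{n+1}$, which become normal via \lemref{lem:seq} ($x^{n+1}y=b_{n+1}yx^{n+1}+a_{n+1}xyx^{n}$ and $x^{n}yx=b_{n}yx^{n+1}+a_{n}xyx^{n}$), producing the last two rows of $L_{2}$. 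For $L_{1}$: evaluating $\ta_{x_{i}}$ and $\ta_{y_{j}}$ on $s(f_{\ell})\otimes t(f_{\ell})$ and $s(g)\otimes t(g)$, each nonzero value is one of the paths $x^{2}y,xyx,yx^{2}$ or $xy^{2},yxy,y^{2}x$, and the only rewriting needed is by the single relation $x^{2}y=\be yx^{2}+\al xyx$, resp.\ $xy^{2}=\be y^{2}x+\al yxy$; collecting these coefficients in the orderings $\rho_{1},\rho_{2}$ — which were arranged precisely so that the outcome is banded — gives $L_{1}$. The cases $n=2$ and $n\ge 3$ then differ only in the extra zero rows $\ta_{f_{i}}^{y^{2}}$ treated above, since $(A_{n+2})_{2}=0$ for $n\ge 3$. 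The one genuine difficulty is the bookkeeping: keeping track of the arrow index ranges near the two ends of the quiver (the $x$'s run $x_{1},\dots,x_{2n+1}$ while the $y$'s run $y_{1},\dots,y_{n+2}$, and $g$ behaves slightly differently from the $f_{i}$), getting the signs right through the down-up relations and \lemref{lem:seq}, and checking that the interleaved $f_{i}$- and $g$-contributions assemble into the precise patterns of $L_{1}$ and $L_{2}$ for the chosen $\rho_{1},\rho_{2}$.
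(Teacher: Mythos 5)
Your proposal is correct and follows essentially the same route as the paper: a direct computation of $\widehat{\partial^{2}}$ on the basis elements of $\widehat{P^{1}}$ from \lemref{lem:dim}, using \lemref{lem:seq} to put the resulting paths into normal form and reading off the columns of $M_{2}$. The $y$-weight grading argument you add is a worthwhile refinement, since it establishes the block shape and the identically vanishing rows ($\ta_{g}^{x^{2n+1}}$, and $\ta_{f_i}^{y^2}$ when $n=2$) a priori, whereas the paper verifies representative columns and leaves the remaining ones as ``similar.''
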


\begin{proof}
We prove only the last two columns of (\ref{mat}); the others and (1) are similar. 
First, for $h \in \cG^{2}$, we have
\begin{align*}
&\widehat{\partial^{2}}(\ta_{y_2}^{x^n})(s(h)\otimes t(h))
= (\ta_{y_2}^{x^n}\circ \partial^{2})(s(h)\otimes t(h))\\
&= \begin{cases}
\ta_{y_2}^{x^n}(-\be (s(y_2)\otimes t(y_2))x_{n+2}x_{n+3}+\cdots ) &\text{if}\ h=f_2,\\
\ta_{y_2}^{x^n}(-\al x_1(s(y_2)\otimes t(y_2))x_{n+2}+\cdots ) &\text{if}\ h=f_1,\\
\ta_{y_2}^{x^n}(x_1(s(y_2)\otimes t(y_2))y_{n+2}+\cdots ) &\text{if}\ h=g,
\end{cases}\;\;
= \begin{cases}
-\be x_2 \cdots x_{n+1}x_{n+2}x_{n+3} &\text{if}\ h=f_2,\\
-\al x_1x_2 \cdots x_{n+1}x_{n+2} &\text{if}\ h=f_1,\\
x_1x_2 \cdots x_{n+1}y_{n+2} &\text{if}\ h=g. 
\end{cases}
\end{align*}
Since $x_1x_2 \cdots x_{n+1}y_{n+2}= b_{n+1} y_1x_{n+1}\cdots x_{2n+1} + a_{n+1}x_1y_2x_{n+2}\cdots x_{2n+1}$ by \lemref{lem:seq},
\begin{align*}
\widehat{\partial^{2}}(\ta_{y_2}^{x^n})(s(h)\otimes t(h))
&= \begin{cases}
-\be x_2 \cdots x_{n+1}x_{n+2}x_{n+3} &\text{if}\ h=f_2\\
-\al x_1x_2 \cdots x_{n+1}x_{n+2} &\text{if}\ h=f_1\\
b_{n+1} y_1x_{n+1}\cdots x_{2n+1} + a_{n+1}x_1y_2x_{n+2}\cdots x_{2n+1} &\text{if}\ h=g\\
\end{cases}\\
&= (-\be \ta_{f_2}^{x^{n+2}} -\al \ta_{f_1}^{x^{n+2}} +b_{n+1}\ta_{g}^{yx^{n+1}} + a_{n+1}\ta_{g}^{xyx^n})(s(h)\otimes t(h)),
\end{align*}
so we get the $(4n+4)$th column. 
Next, for $h \in \cG^{2}$, we have
\begin{align*}
&\widehat{\partial^{2}}(\ta_{y_1}^{x^n})(s(h)\otimes t(h))
= (\ta_{y_1}^{x^n}\circ \partial^{2})(s(h)\otimes t(h))\\
&= \begin{cases}
\ta_{y_1}^{x^n}(-\be (s(y_1)\otimes t(y_1))x_{n+1}x_{n+2} + \cdots ) &\text{if}\ h=f_1,\\
\ta_{y_1}^{x^n}(-\be (s(y_1)\otimes t(y_1))y_{n+1}x_{2n+1} -\al (s(y_1)\otimes t(y_1))x_{n+1}y_{n+2} + \cdots ) &\text{if}\ h=g,
\end{cases}\\
&= \begin{cases}
-\be x_1 \cdots x_{n}x_{n+1}x_{n+2} &\text{if}\ h=f_1,\\
-\be x_1 \cdots x_{n}y_{n+1}x_{2n+1} -\al x_1 \cdots x_{n}x_{n+1}y_{n+2} &\text{if}\ h=g.
\end{cases}
\end{align*}
It follows from \lemref{lem:seq} that
\begin{align*}
&-\be x_1 \cdots x_{n}y_{n+1}x_{2n+1} -\al x_1 \cdots x_{n}x_{n+1}y_{n+2}\\
&=-\be(b_{n} y_1x_{n+1}\cdots x_{2n} + a_{n}x_1y_2x_{n+2}\cdots x_{2n})x_{2n+1} -\al(b_{n+1} y_1x_{n+1}\cdots x_{2n+1} + a_{n+1}x_1y_2x_{n+2}\cdots x_{2n+1})\\
&=-(\be b_{n}+\al b_{n+1})y_1x_{n+1}\cdots x_{2n+1}-(\be a_n +\al a_{n+1})x_1y_2x_{n+2}\cdots x_{2n+1}.
\end{align*}
Thus we conclude
\[ \widehat{\partial^{2}}(\ta_{y_1}^{x^n})(s(h)\otimes t(h))
= (-\be \ta_{f_1}^{x^{n+2}} -(\be b_{n}+\al b_{n+1})\ta_{g}^{yx^{n+1}} -(\be a_n +\al a_{n+1})\ta_{g}^{xyx^{n}})(s(h)\otimes t(h)). \]
This gives the last column. 
\end{proof}

\begin{rem} \label{rem:L2}
If $\deg x \leq \deg y$ and $\deg y$ is not a multiple of $\deg x$, then there are no paths of length $\deg y$ in the quiver of $\nabla A$, except for $y_i$'s,
so we see that ``the $L_2$ part'' does not appear in $M_2$.
\end{rem}

\begin{lem}\label{lem:de}
Let $\de_n=\left(\begin{smallmatrix} 1 &0 \end{smallmatrix}\right)
\left(\begin{smallmatrix} \al &1 \\ \be &0 \end{smallmatrix}\right)^n
\left(\begin{smallmatrix} 1 \\0 \end{smallmatrix}\right)$.
\begin{enumerate}[{\rm (1)}]
\item If $n$ is odd and $\al= 0$, then $\de_n=0$.
\item If $\al^2+4\be=0$, then $\de_n= (n+1)(\al/2)^n \neq 0$.
\end{enumerate}
\end{lem}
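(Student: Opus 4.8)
The plan is to reduce both assertions to the scalar linear recurrence governing the numbers $a_i$. Recall from the proof of \lemref{lem:seq} that $a_i=\al a_{i-1}+b_{i-1}$ and $b_i=\be a_{i-1}$, so that $a_i=\al a_{i-1}+\be a_{i-2}$ for all $i\geq 3$, with initial values $a_1=1$ and $a_2=\al$. Moreover, as observed in the proof of \lemref{lem:L2}, one has $\de_n=\left(\begin{smallmatrix}1&0\end{smallmatrix}\right)\left(\begin{smallmatrix}a_{n+1}\\b_{n+1}\end{smallmatrix}\right)=a_{n+1}$, so in each case it suffices to compute $a_{n+1}$.

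For (1) I would set $\al=0$. The recurrence becomes $a_i=\be a_{i-2}$, and since $a_2=\al=0$, an immediate induction on $i$ gives $a_i=0$ for every even $i\geq 2$. As $n$ odd forces $n+1$ to be even, we conclude $\de_n=a_{n+1}=0$.

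For (2) I would set $\al^2+4\be=0$, i.e.\ $\be=-\al^2/4$; since $\be\neq 0$ is a standing hypothesis of the section, necessarily $\al\neq 0$. The characteristic polynomial $t^2-\al t-\be$ then equals $(t-\al/2)^2$, so $\al/2$ is a double root, and one checks by induction (base cases $a_1=1$, $a_2=\al$) that $a_i=i(\al/2)^{i-1}$ for all $i\geq 1$: substituting $\be=-\al^2/4$ turns $\al\cdot i(\al/2)^{i-1}+\be\cdot(i-1)(\al/2)^{i-2}$ into $(i+1)(\al/2)^{i}$, which is $a_{i+1}$. Hence $\de_n=a_{n+1}=(n+1)(\al/2)^n$, and this is nonzero because $\al\neq 0$.

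There is no real obstacle here; the argument is entirely elementary. The only two points that deserve an explicit word are the identity $\de_n=a_{n+1}$ inherited from \lemref{lem:L2}, and the appeal to the standing hypothesis $\be\neq 0$ in part (2) — without it the claimed value $(n+1)(\al/2)^n$ would vanish when $\al=0$, so that hypothesis is genuinely needed for the non-vanishing.
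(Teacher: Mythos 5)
Your proof is correct. You take a slightly different route from the paper: for part (2) the paper conjugates $\left(\begin{smallmatrix} \al &1 \\ \be &0 \end{smallmatrix}\right)$ to its Jordan form $\left(\begin{smallmatrix} \al/2 &0 \\ 1 &\al/2 \end{smallmatrix}\right)$ and reads off the $n$-th power explicitly, whereas you pass to the scalar recurrence $a_i=\al a_{i-1}+\be a_{i-2}$ (extracted from the proof of \lemref{lem:seq}) together with the identification $\de_n=a_{n+1}$ from \lemref{lem:L2}, and verify the closed form $a_i=i(\al/2)^{i-1}$ by induction. The two arguments are two faces of the same fact — the repeated eigenvalue $\al/2$ of the companion matrix is the double root of $t^2-\al t-\be$ — but yours avoids exhibiting and inverting the change-of-basis matrix, and it handles part (1) (which the paper dismisses as ``easy to check directly'') by the same mechanism, namely $a_{\mathrm{even}}=0$ when $\al=0$. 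One point worth making explicit in part (2): the conclusion $(n+1)(\al/2)^n\neq 0$ also uses $\operatorname{char}k=0$ (so that $n+1\neq 0$ in $k$), a standing hypothesis of the paper, in addition to $\al\neq 0$.
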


\begin{proof}
(1) 
Clearly, $\de_1=\al=0$. For $n \geq 3$,
\[ \de_n
=\begin{pmatrix} 1 &0 \end{pmatrix}\begin{pmatrix} a_{n+1} \\ b_{n+1} \end{pmatrix}
= a_{n+1} = b_n = \be a_{n-1}
=\be \begin{pmatrix} 1 &0 \end{pmatrix}\begin{pmatrix} a_{n-1} \\ b_{n-1} \end{pmatrix}
=\be \de_{n-2} \]
by Lemma \ref{lem:seq}, so the result follows.

(2) If $\al^2+4\be=0$, then
\[
\begin{pmatrix} \al &1 \\ \be &0 \end{pmatrix}
=  \begin{pmatrix} \al & 1 \\ -\al^2/4 &0 \end{pmatrix}
=  \begin{pmatrix} -1 &1 \\ 1+\al/2 &-\al/2 \end{pmatrix}
   \begin{pmatrix} \al/2 & 0 \\ 1 &\al/2 \end{pmatrix}
   \begin{pmatrix} \al/2 & 1 \\ 1+\al/2 &1 \end{pmatrix},
\]
so it follows that 
\[
\begin{pmatrix} \al &1 \\ \be &0 \end{pmatrix}^n
=\begin{pmatrix} -1 &1 \\ 1+\al/2 &-\al/2 \end{pmatrix}
\begin{pmatrix} (\al/2)^n & 0 \\ n(\al/2)^{n-1} &(\al/2)^n \end{pmatrix}
\begin{pmatrix} \al/2 & 1 \\ 1+\al/2 &1 \end{pmatrix}.
\]
Therefore, we have
\[
\de_n = \begin{pmatrix} -1 &1 \end{pmatrix}
\begin{pmatrix} (\al/2)^n & 0 \\ n(\al/2)^{n-1} &(\al/2)^n \end{pmatrix}
\begin{pmatrix} \al/2 \\ 1+\al/2 \end{pmatrix}
= (n+1)(\al/2)^n.
\]
Since $\be$ is nonzero, so is $\al$ and hence so is $\de_n$. 
\end{proof}

\begin{lem} \label{lem:L1}
The rank of $L_1$ is $n$ if $n$ is odd and $\al= 0$, and it is $n+1$ otherwise.
\end{lem}

\begin{proof}
Since $\be \neq 0$, we have
\begin{align*}
L_1&\to 
{\scriptsize
\arraycolsep=2pt 
\begin{array}{llccccccccccccccccccccccccrr}
\ldelim({13}{1pt} && \beta &0&&&&&0&0&&&&&& -\beta & -\beta & \beta &0&&&&&& -\beta & \beta &\rdelim){13}{1pt} &\rdelim\}{7}{1pt}[\text{$n+1$ rows}] \\
&&\beta & \beta &&&&& -\beta & -\beta & &&&&&& -\beta &0 &\beta  \\
&&& \beta & \beta & &&&& -\beta& -\beta & &&&&&& -\beta &0& \beta \\
&& &&&\ddots&&&&&&&\ddots&&&&&&&&& \ddots \\
&& &&&& \beta & \beta &&&&&& -\beta & -\beta &&&&&&&& -\beta &0& \beta \\
&& &&&&& \beta & \beta & &&&&& -\beta & -\beta &&&&&&&& -\beta &0& \beta \\ \cdashline{3-26}
&&\alpha & &&&&&-\alpha&  & &&&&&& -\alpha& \alpha &&&&&&&&&&\rdelim\}{6}{1pt}[\text{$n+1$ rows}]\\
&& & \alpha &&&&&&-\alpha& & &&&&&& -\alpha & \alpha \\
&& &&&\ddots&&&&&&&\ddots&&&&&&&&&\ddots\\
&& &&&&& \alpha&&&&&&& -\alpha& &&&&&&&& -\alpha & \alpha \\
&& &&&&&& \alpha&&&&&&& -\alpha& &&&&&&&& -\alpha & \alpha \\
\end{array}}
\\
&\to{\scriptsize
\arraycolsep=2pt
\begin{array}{llccccccccccccccccccccccccrr}
\ldelim({13}{1pt} && \beta &0&&&&&0&0&&&&&& -\beta & -\beta & \beta &0&&&&&& -\beta & \beta &\rdelim){13}{1pt} &\rdelim\}{7}{1pt}[\text{$n+1$ rows}] \\
&&\beta & \beta &&&&& -\beta & -\beta & &&&&&& -\beta &0&\beta  \\
&&& \beta & \beta & &&&& -\beta& -\beta & &&&&&& -\beta &0& \beta \\
&& &&&\ddots&&&&&&&\ddots&&&&&&&&& \ddots \\
&& &&&& \beta & \beta &&&&&& -\beta & -\beta &&&&&&&& -\beta &0& \beta \\
&& &&&&& \beta & \beta & &&&&& -\beta & -\beta &&&&&&&& -\beta &0& \beta \\ \cdashline{3-26}
&&\alpha &\alpha&&&&&-\alpha&-\alpha  & &&&&&& -\alpha&0& \alpha &&&&&&&&&\rdelim\}{6}{1pt}[\text{$n+1$ rows}]\\
&& &\alpha &\alpha&&&&&-\alpha&-\alpha  & &&&&&& -\alpha&0& \alpha \\
&& &&&\ddots&&&&&&&\ddots&&&&&&&&&\ddots\\
&& &&&&&\alpha &\alpha&&&&&&-\alpha&-\alpha  &&&&&&&& -\alpha&0& \alpha \\
&& &&&&&& \alpha&&&&&&& -\alpha& &&&&&&&& -\alpha & \alpha \\
\end{array}}
\\
&\to{\scriptsize
\arraycolsep=2pt
\begin{array}{llccccccccccccccccccccccccrr}
\ldelim({12}{1pt} && \beta &0&&&&&0&0&&&&&& -\beta & -\beta & \beta &0&&&&&& -\beta & \beta &\rdelim){12}{1pt} &\rdelim\}{8}{1pt}[\text{$n+2$ rows}] \\
&&\beta & \beta &&&&& -\beta & -\beta & &&&&&& -\beta &0&\beta  \\
&&& \beta & \beta & &&&& -\beta& -\beta & &&&&&& -\beta &0& \beta \\
&& &&&\ddots&&&&&&&\ddots&&&&&&&&& \ddots \\
&& &&&& \beta & \beta &&&&&& -\beta & -\beta &&&&&&&& -\beta &0& \beta \\
&& &&&&& \beta & \beta & &&&&& -\beta & -\beta &&&&&&&& -\beta &0& \beta \\ 
&& &&&&&& \alpha&&&&&&& -\alpha& &&&&&&&& -\alpha & \alpha \\ \cdashline{3-26}
&&0&0&\cdots &&&&&&&&&&&&&&&&&&&\cdots&0&0&&\rdelim\}{4}{1pt}[\text{$n$ rows}]\\
&&\vdots &&&&&&&&&&&&&&&&&&&&&&&\vdots\\
&&0&0&\cdots &&&&&&&&&&&&&&&&&&&\cdots&0&0\\
\end{array}}
\end{align*}
by elementary row operations, so we get
\begin{align*}
&\rank L_1
=\rank {\scriptsize
\arraycolsep=2pt
\begin{array}{llccccccccccccccccccccccccr}
\ldelim({9}{1pt} &&1 & &&&&&0&0&&&&&& -1 & -1 & 1 &&&&&&& -1 & 1 &\rdelim){9}{1pt}\\
&&&1 &&&&& -1 & -1 & &&&&&1& 0 &-1 &1&&&&&& 1 & -1 \\
&&&&1 & &&&1& 0& -1 & &&&&-1&0& 0 &-1& 1 &&&&& -1 & 1 \\
&&&&&\ddots&&&&&&&\ddots&&&&&&&&& \ddots \\
&&&&&& 1 &0&(-1)^{n-2}&&&&  &-1 &0 &(-1)^{n-1}&&&&&&&-1&1  &(-1)^{n-1}& (-1)^{n-2} \\
&&&&&& \beta & \beta &&&&&& -\beta & -\beta &&&&&&&& -\beta &0& \beta \\
&&&&&&& \beta & \beta & &&&&& -\beta & -\beta &&&&&&&& -\beta &0& \beta \\
&&&&&&&& \alpha&&&&&&& -\alpha& &&&&&&&& -\alpha & \alpha 
\end{array}}\\
&=\rank {\scriptsize
\arraycolsep=2pt
\begin{array}{llccccccccccccccccccccccccr}
\ldelim({8}{1pt} &&1 & &&&&&0&0&&&&&& -1 & -1 & 1 &&&&&&& -1 & 1 &\rdelim){8}{1pt}\\
&&&1 &&&&& -1 & -1 & &&&&&1& 0 &-1 &1&&&&&& 1 & -1 \\
&&&&1 & &&&1& 0& -1 & &&&&-1&0& 0 &-1& 1 &&&&& -1 & 1 \\
&&&&&\ddots&&&&&&&\ddots&&&&&&&&& \ddots \\
&&&&&&& 1 &(-1)^{n-1}&&&&&  &-1 &(-1)^{n}&&&&&&&& -1 &1+(-1)^{n}& (-1)^{n-1} \\
&&&&&&& \beta & \beta & &&&&& -\beta & -\beta &&&&&&&& -\beta &0& \beta \\
&&&&&&&& \alpha&&&&&&& -\alpha& &&&&&&&& -\alpha & \alpha \\
\end{array}}\\
&=\rank 
{\scriptsize
\arraycolsep=1.5pt
\begin{array}{lllccccccccccccccccccccccccrr}
\ldelim({8}{1pt} &&&1 & &&&&&0&0&&&&&& -1 & -1 & 1 &&&&&&& -1 & 1 &\rdelim){8}{1pt}&\rdelim\}{8}{1pt}[\text{$n+2$ rows}.]\\
&&&&1 &&&&& -1 & -1 & &&&&&1& 0 &-1 &1&&&&&& 1 & -1 \\
&&&&&1 & &&&1& 0& -1 & &&&&-1&0& 0 &-1& 1 &&&&& -1 & 1 \\
&&&&&&\ddots&&&&&&&\ddots&&&&&&&&& \ddots \\
&&&&&&&& 1 &(-1)^{n-1}&&&&&  &-1 &(-1)^{n}&&&&&&& &-1 &1+(-1)^{n}& (-1)^{n-1} \\
&&&&&&&&& 1+(-1)^{n} & &&&&&  & -1+(-1)^{n+1} &&&&&&&& &-1+(-1)^{n+1}& 1+(-1)^{n} \\
&&&&&&&&& \alpha&&&&&&& -\alpha& &&&&&&&& -\alpha & \alpha \\
\end{array}}.
\end{align*}
Hence the assertion follows.
\end{proof}

\begin{lem} \label{lem:L2}
Let $\de_n=\left(\begin{smallmatrix} 1 &0 \end{smallmatrix}\right)\left(\begin{smallmatrix} \al &1 \\ \be &0 \end{smallmatrix}\right)^n\left(\begin{smallmatrix} 1 \\ 0 \end{smallmatrix}\right)$.
If $\de_n=0$, then $\rank L_2 = n$.
If $\de_n \neq 0$ and $\al^2+4\be= 0$, then $\rank L_2 =n+1$.
If $\de_n \neq 0$ and $\al^2+4\be \neq 0$, then $\rank L_2 = n+2$.
\end{lem}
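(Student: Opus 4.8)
The plan is to reduce $L_2$ to a block form dictated by the obvious pivots sitting in its first $n$ rows. Write $R_1,\dots,R_{n+2}$ for the rows of $L_2$. Restricted to the first $n$ columns, the rows $R_1,\dots,R_n$ form an upper triangular matrix with $1$'s on the diagonal, so they are linearly independent and we may use them to clear the first $n$ entries of $R_{n+1}$ and $R_{n+2}$ by elementary row operations. After this clearing, $R_{n+1}$ and $R_{n+2}$ are supported only in columns $n+1$ and $n+2$, so that $\rank L_2 = n + \rank N$, where $N$ is the $2\times2$ matrix recording columns $n+1,n+2$ of the two reduced rows.

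To identify $N$ I would use the recursion behind \lemref{lem:seq}: $a_1=1$, $a_i=\al a_{i-1}+b_{i-1}$ and $b_i=\be a_{i-1}$, so that $a_i=\al a_{i-1}+\be a_{i-2}$ for $i\ge3$; also $\de_n=a_{n+1}$. A short computation with this recursion shows that $\sum_{i=1}^{n}a_iR_i$ agrees with $R_{n+2}$, and $-\sum_{i=1}^{n}a_{i+1}R_i$ agrees with $R_{n+1}$, throughout columns $1,\dots,n$ — essentially because the $j$-th entry of $\sum_{i=1}^{n}c_iR_i$ (for $1\le j\le n$) equals $c_j-\al c_{j-1}-\be c_{j-2}$, with $c_i:=0$ for $i\le 0$. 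Subtracting these combinations and simplifying the entries that survive in columns $n+1,n+2$ by means of $b_{n+1}=\be a_n$, $a_{n+2}=\al a_{n+1}+\be a_n$ and $a_{n+1}=\al a_n+\be a_{n-1}$, I expect to arrive at
\[ N=a_{n+1}\begin{pmatrix}-\al&-2\be\\ 2&-\al\end{pmatrix},\qquad\text{hence}\qquad \det N=a_{n+1}^2(\al^2+4\be)=\de_n^2(\al^2+4\be). \]

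The three cases then drop out. If $\de_n=0$ then $N=0$, so $\rank L_2=n$. If $\de_n\ne0$ and $\al^2+4\be=0$, then $\det N=0$ but $N\ne0$ (its $(2,1)$-entry is $2\de_n\ne0$), so $\rank N=1$ and $\rank L_2=n+1$. If $\de_n\ne0$ and $\al^2+4\be\ne0$, then $\det N\ne0$, so $\rank N=2$ and $\rank L_2=n+2$.

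I expect the only part requiring genuine care to be the bookkeeping in the second paragraph: verifying that the stated combinations really do clear columns $1,\dots,n$, keeping track of the fact that precisely the rows $R_{n-1}$ and $R_n$ feed into columns $n+1$ and $n+2$ of $\sum a_iR_i$ and $\sum a_{i+1}R_i$ (this is where the hypothesis $n\ge2$ enters), and applying the recursions $a_i=\al a_{i-1}+\be a_{i-2}$ and $b_i=\be a_{i-1}$ a few times to collapse the resulting expressions into the clean $2\times2$ form above. I do not anticipate any conceptual obstacle beyond this.
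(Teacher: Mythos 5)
Your proposal is correct and follows essentially the same route as the paper: both clear the first $n$ entries of the last two rows against the unitriangular top block (the paper does this one column at a time, propagating the pattern $(\mp a_j,\mp b_j)$, where you write the closed-form combination $\sum a_iR_i$, $\sum a_{i+1}R_i$) and both arrive at the same $2\times 2$ block $a_{n+1}\left(\begin{smallmatrix}-\al&-2\be\\ 2&-\al\end{smallmatrix}\right)$ with determinant $\de_n^2(\al^2+4\be)$. The case analysis at the end, including the observation that the $(2,1)$-entry $2\de_n$ is nonzero when $\de_n\neq0$, matches the paper's conclusion.
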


\begin{proof}
By elementary row operations, we have
\begin{align*}
&L_2\rightarrow 
\left(\begin{smallmatrix}
1&-\alpha &-\beta \\ 
&1&-\alpha &-\beta \\ 
&&1&-\alpha &-\beta \\
&&&&\ddots \\  \\ 
&&&&&1&-\alpha &-\beta \\ 
&&&&&&1&-\alpha &-\beta \\
0 &-b_2-\al a_2 &-\be a_2 &&&&& b_{n+1} & -\be a_{n+1} \\
0 &b_1+\al a_1 &\be a_1 &&&&& a_{n+1} & -(\beta a_n +\alpha a_{n+1})
\end{smallmatrix}\right)
=
\left(\begin{smallmatrix}
1&-\alpha &-\beta \\ 
&1&-\alpha &-\beta \\ 
&&1&-\alpha &-\beta \\
&&&&\ddots \\  \\ 
&&&&&1&-\alpha &-\beta \\ 
&&&&&&1&-\alpha &-\beta \\
0 &-a_3 &-b_3&&&&& b_{n+1} & -\be a_{n+1} \\
0 &a_2 &b_2&&&&& a_{n+1} & -(\beta a_n +\alpha a_{n+1})
\end{smallmatrix}\right)
\\
&\rightarrow \cdots \rightarrow
\left(\begin{smallmatrix}
1&-\alpha &-\beta \\ 
&1&-\alpha &-\beta \\ 
&&&&\ddots \\ 
&&&&&1&-\alpha &-\beta \\ 
&&&&&&1&-\alpha &-\beta \\
0 &&\cdots &&0 &-a_{n} &-b_{n}& b_{n+1} & -\be a_{n+1} \\
0 &&\cdots &&0 &a_{n-1} &b_{n-1}& a_{n+1} & -(\beta a_n +\alpha a_{n+1})
\end{smallmatrix}\right)
\rightarrow 
{\left(\begin{smallmatrix}
1&-\alpha &-\beta \\ 
&1&-\alpha &-\beta \\ 
&&&\ddots \\ 
&&&&1&-\alpha &-\beta \\ 
&&&&&1&-\alpha &-\beta \\
0 &&\cdots &&0 &-a_{n+1}& 0 & -\be a_{n+1} \\
0 &&\cdots &&0 &a_{n}& a_{n+1}+b_n & -(\beta a_n +\alpha a_{n+1})
\end{smallmatrix}\right)}
\\
&\rightarrow
{\left(\begin{smallmatrix}
1&-\alpha &-\beta \\ 
&1&-\alpha &-\beta \\ 
&&&\ddots \\ 
&&&&1&-\alpha &-\beta \\ 
&&&&&1&-\alpha &-\beta \\
0  &&\cdots &&0 &0&-\al a_{n+1} & -2\be a_{n+1} \\
0  &&\cdots &&0 &0 & a_{n+1} +b_n +\al a_n  & -\alpha a_{n+1}
\end{smallmatrix}\right)}
= 
\left(\begin{smallmatrix}
1&-\alpha &-\beta \\ 
&1&-\alpha &-\beta \\ 
&&&&\ddots \\ 
&&&&&1&-\alpha &-\beta \\ 
&&&&&&1&-\alpha &-\beta \\
0 &0 &&\cdots &&0 &0&-\al a_{n+1} & -2\be a_{n+1} \\
0 &0 &&\cdots &&0 &0 & 2a_{n+1}  & -\alpha a_{n+1}
\end{smallmatrix}\right).
\end{align*}
Since
$\det 
\left(\begin{smallmatrix}-\al a_{n+1} & -2\be a_{n+1} \\ 2a_{n+1}  & -\alpha a_{n+1} \end{smallmatrix}\right)
=a_{n+1}^2(\al^2+4\be)$ and 
$a_{n+1}
=\left(\begin{smallmatrix} 1 &0 \end{smallmatrix}\right)\left(\begin{smallmatrix} a_{n+1} \\ b_{n+1} \end{smallmatrix}\right)
=\de_n$,
the result follows.
\end{proof}

We are now ready to prove \thmref{thm:main}.

\begin{proof}[Proof of \thmref{thm:main}]
By (\ref{HH0}), we have $\dim_k \HH^0(\nabla A)=1$, and so $\dim_{k} \Ker \widehat{\partial^{1}}=1$.
It follows from \lemref{lem:dim}(1) that
$\dim_{k}\Im \widehat{\partial^{1}}= \dim_k \widehat{P^{0}} - \dim_{k}\Ker \widehat{\partial^{1}} =(2n+2)-1=2n+1$.
Lemmas \ref{lem:dim}(2) and \ref{lem:M2} imply
\begin{align*}
\dim_k \HH^1(\nabla A)
&=\dim_{k} \Ker \widehat{\partial^{2}}-\dim_{k}\Im \widehat{\partial^{1}}\\
&=(\dim_{k} \widehat{P^{1}}-\dim_{k}\Im \widehat{\partial^{2}})-\dim_{k}\Im \widehat{\partial^{1}}\\
&=\dim_{k}\widehat{P^{1}}-\rank M_{2}-\dim_{k} \Im \widehat{\partial^{1}}\\
&=(4n+5) -\rank L_1-\rank L_2-(2n+1)\\
&=2n+4 -\rank L_1-\rank L_2.
\end{align*}
By Lemmas \ref{lem:de}, \ref{lem:L1}, and \ref{lem:L2}, we obtain
\begin{align*}
&\dim_k \HH^1(\nabla A)\\
&=
\begin{cases}
2n+4 - n -n =4 
&\text{if}\ n\ \text{is odd and}\ \al =0 \ (\text{in this case $\de_n =0$}),\\
2n+4 -(n+1)- n =3
&\text{if}\ n\ \text{is odd}, \al \neq 0, \text{and}\ \de_n=0,\text{or if}\ n\ \text{is even and}\ \de_n=0,\\
2n+4 - (n+1)-(n+1)=2
&\text{if}\ \al^2+4\be=0\ (\text{in this case $\de_n\neq 0$}), \\
2n+4 - (n+1)-(n+2)=1
&\text{if}\ \de_n\neq 0\ \text{and}\ \al^2+4\be\neq 0.\\
\end{cases}
\end{align*}
Moreover, since
\begin{align*}
\dim_k\HH^{2}(\nabla A)
&=\dim_{k}\Ker \widehat{\partial^{3}}-\dim_{k}\Im \widehat{\partial^{2}}\\
&=\dim_{k} \widehat{P^{2}}-\rank M_{2}\\
&=\dim_{k} \widehat{P^{2}} -\rank L_1- \rank L_2\\
&=\dim_{k} \widehat{P^{2}} +\dim_k \HH^1(\nabla A) -(2n+4)\\
&=\begin{cases}
13 +\dim_k \HH^1(\nabla A) -8  & \text{if}\ n=2, \\
3n+5 +\dim_k \HH^1(\nabla A) -(2n+4) & \text{if}\ n\geq 3, 
\end{cases}\\
&=\begin{cases}
\dim_k \HH^1(\nabla A)+5 & \text{if}\ n=2, \\
\dim_k \HH^1(\nabla A)+n+1 & \text{if}\ n\geq 3 
\end{cases}
\end{align*}
by Lemmas \ref{lem:dim}(3),(4), it follows that
\[
\dim_k \HH^2(\nabla A)=
\begin{cases}
8 \quad \text{if}\ n=2\ \text{and}\ \de_2=0,\\
7 \quad \text{if}\ n=2\ \text{and}\ \al^2+4\be=0\ (\text{in this case $\de_2\neq 0$}),\\
6 \quad \text{if}\ n=2, \de_2\neq 0, \text{and}\ \al^2+4\be\neq 0,\\
n+5 \quad  \text{if}\ n\ \text{is odd and}\ \al =0 \; (\text{in this case $\de_n=0$}),\\
n+4 \quad \text{if}\ n\ \text{is odd}, \al \neq 0, \text{and}\ \de_n=0, \text{or if}\ n\geq 4\ \text{is even and}\ \de_n=0, \\
n+3 \quad \text{if}\ n\geq 3\ \text{and}\ \al^2+4\be=0\ (\text{in this case $\de_n\neq 0$}), \\
n+2 \quad \text{if}\ n\geq 3, \de_n\neq 0, \text{and}\ \al^2+4\be\neq 0. 
\end{cases}
\]
Clearly $\HH^i(\nabla A)=0$ for $i\geq 3$, so the proof is completed.
\end{proof}

\section{Discussion on Grothendieck groups}

At the end of the paper, we give a discussion of our results in the context of Grothendieck groups
based on \cite[Section 3]{BP} and \cite[Section 3.1]{B}.
Let $\sf T$ be a triangulated category, and
let $K_0(\sf T)$ be the Grothendieck group of $\sf T$ (see \cite[Section 3]{BP} for details).
If $\sf T$ admits a full strong exceptional sequence of length $r$, then $K_0(\sf T)$ is $\Z^{r}$, so $\rk K_0({\sf T})=r$.
If $\sf T$ has the Serre functor $S$ in the sense of Bondal and Kapranov \cite{BK}, then 
$S$ induces an automorphism $\mathfrak s$ of $K_0({\sf T})$.
\begin{thm} \label{thm:K}
Let $\Db(\coh X)$ be the bounded derived category of coherent sheaves on a smooth projective variety $X$.
\begin{enumerate}[{\rm (1)}]
\item \textnormal{(\cite[Lemma 3.1]{BP})} The action of $(-1)^{\dim X}{\mathfrak s}$ on $K_0(\Db(\coh X))$ is unipotent.
\item \textnormal{(\cite[Corollary 25]{Bel})} If $\Db(\coh X)$ admits a full strong exceptional sequence, then
\[
\chi(\HH^\bullet(X)) = (-1)^{\dim X}\rk K_0(\Db(\coh X)).
\]
where $\chi(\HH^\bullet(X)):= \sum_{i \in \Z}(-1)^i \dim_k \HH^i(X)$.
\end{enumerate}
\end{thm}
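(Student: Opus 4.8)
As the annotations indicate, (1) and (2) are \cite[Lemma 3.1]{BP} and \cite[Corollary 25]{Bel}, so the shortest route is to cite those; I sketch nonetheless how each argument runs.

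For (1), the plan is to identify the operator explicitly and then run it against a finite filtration on $K_0$. By Serre duality the Serre functor of $\Db(\coh X)$ is $S(-)=(-)\otimes\omega_X[\dim X]$, and since $[\cF[1]]=-[\cF]$ in $K_0$, the automorphism $(-1)^{\dim X}\mathfrak{s}$ is the class-level operation $[\cF]\mapsto[\cF\otimes\omega_X]$. I would then take the filtration $K_0(X)=F^0\supseteq F^1\supseteq\cdots\supseteq F^{\dim X}\supseteq F^{\dim X+1}=0$ by codimension of support, with $F^p$ generated by classes of coherent sheaves supported in codimension $\geq p$. Tensoring by a line bundle does not change supports, so it preserves the filtration, and it acts as the identity on each quotient $F^p/F^{p+1}$ --- the generic rank along every irreducible component of the support is unaffected; equivalently, after $\otimes\Q$ and Grothendieck's identification $K_0(X)_\Q\cong\bigoplus_i\mathrm{CH}^i(X)_\Q$ the operation becomes multiplication of the Chern character by $e^{c_1(\omega_X)}$, which fixes the leading term of every class. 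Hence $\big((-1)^{\dim X}\mathfrak{s}-\mathrm{id}\big)F^p\subseteq F^{p+1}$, and since the filtration has length $\dim X+1$ the operator is unipotent.

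For (2), the plan has three steps. First, pass to cohomology: because $\fchar k=0$ and $X$ is smooth, the Hochschild--Kostant--Rosenberg decomposition gives $\HH^i(X)\cong\bigoplus_{p+q=i}H^q(X,\wedge^p T_X)$, so $\chi(\HH^\bullet(X))=\sum_p(-1)^p\chi(X,\wedge^p T_X)$. Second, evaluate this alternating sum via Hirzebruch--Riemann--Roch: writing $x_1,\dots,x_n$ ($n=\dim X$) for the Chern roots of $T_X$, one has $\sum_p(-1)^p\operatorname{ch}(\wedge^p T_X)=\prod_i(1-e^{x_i})$; multiplying by $\operatorname{td}(X)=\prod_i x_i/(1-e^{-x_i})$ and using $(1-e^{x_i})/(1-e^{-x_i})=-e^{x_i}$ collapses the product to $(-1)^n e^{c_1(T_X)}c_n(T_X)$, whose integral over $X$ equals $(-1)^n\int_X c_n(T_X)=(-1)^{\dim X}\chi_{\mathrm{top}}(X)$. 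Third, feed in the hypothesis: a full strong exceptional sequence $E_1,\dots,E_r$ makes $[E_1],\dots,[E_r]$ a $\Z$-basis of $K_0(\Db(\coh X))$ (in this basis the Euler form has triangular Gram matrix with $1$'s on the diagonal), so $\rk K_0=r$; it also yields a semiorthogonal decomposition of $\Db(\coh X)$ into $r$ copies of $\Db(k)$, and additivity of Hochschild homology then forces $\HH_m(X)=0$ for $m\neq0$ and $\dim_k\HH_0(X)=r$, so the odd cohomology of $X$ vanishes and $\chi_{\mathrm{top}}(X)=r$. Combining the three steps gives $\chi(\HH^\bullet(X))=(-1)^{\dim X}\rk K_0(\Db(\coh X))$.

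If one wanted a fully self-contained proof, the main obstacles would be the two structural inputs rather than the Riemann--Roch bookkeeping: in (1), verifying cleanly that $\otimes$ by a line bundle acts trivially on the associated graded of the codimension filtration, and in (2), the implication ``full exceptional collection $\Rightarrow$ $\chi_{\mathrm{top}}(X)=r$'', which rests on the additivity of Hochschild homology (or of the motive) along semiorthogonal decompositions and the ensuing vanishing of odd cohomology.
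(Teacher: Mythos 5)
Your proposal is correct. The paper offers no proof of this theorem---both parts are simply cited from \cite[Lemma 3.1]{BP} and \cite[Corollary 25]{Bel}---and your sketches accurately reconstruct the standard arguments behind those citations: unipotence of $[\mathcal{F}]\mapsto[\mathcal{F}\otimes\omega_X]$ via the codimension-of-support filtration, and the Hochschild--Kostant--Rosenberg plus Riemann--Roch computation $\chi(\HH^\bullet(X))=(-1)^{\dim X}\chi_{\mathrm{top}}(X)$ combined with the fact that a full exceptional collection of length $r$ forces $K_0\cong\Z^r$ and $\chi_{\mathrm{top}}(X)=r$.
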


Let $A=A(\al, \be)$ be a graded down-up algebra with $\deg x=1, \deg y= n\geq 1$, and $\be \neq 0$.
Then $\Db(\tails A)$ has a full strong exceptional sequence of length $2n+2$ by \cite[Propositions 4.3, 4.4]{MM},
so $\rk K_0(\Db(\tails A)) =2n+2$.
Moreover $\Db(\tails A)$ has the Serre functor by \cite[Appendix A]{dNV}.
Note that $\gldim (\tails A)= \gldim \nabla A=2$.

If $n=1$, then ${\mathfrak s}$ acts unipotently on $K_0(\Db(\tails A))$ (see  \cite[comments after Remark 26]{Bel}), and 
it follows from \thmref{thm:Bel} that 
\[ \chi(\HH^\bullet(\nabla A)) = 4 = \rk K_0(\Db(\tails A))\]
where $\chi(\HH^\bullet(\nabla A)):= \sum_{i \in \Z}(-1)^i \dim_k \HH^i(\nabla A)$, so an analogue of \thmref{thm:K} holds.
For the case $n\geq 2$, we have the following.

\begin{prop}\label{prop:K}
If $n=2$, then ${\mathfrak s}$ acts unipotently on $K_0(\Db(\tails A))$  and  
\begin{equation}\label{eq:E1}
 \chi(\HH^\bullet(\nabla A)) = 6 = \rk K_0(\Db(\tails A)).
\end{equation}
If $n\geq 3$, then ${\mathfrak s}$ does not act unipotently on $K_0(\Db(\tails A))$ and  
\begin{equation}\label{eq:E2}
\chi(\HH^\bullet(\nabla A)) = n+2 \neq 2n+2 = \rk K_0(\Db(\tails A)).
\end{equation}
\end{prop}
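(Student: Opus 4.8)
The plan is to deduce the two numerical identities at once from \thmref{thm:main}, and then to obtain the (non)unipotency of $\mathfrak s$ from an explicit description of its matrix on $K_0$. For the Euler characteristics: by \thmref{thm:main} we have $\HH^i(\nabla A)=0$ for $i\geq 3$, so $\chi(\HH^\bullet(\nabla A))=1-\dim_k\HH^1(\nabla A)+\dim_k\HH^2(\nabla A)$, and the proof of \thmref{thm:main} already shows $\dim_k\HH^2(\nabla A)=\dim_k\HH^1(\nabla A)+5$ when $n=2$ and $\dim_k\HH^2(\nabla A)=\dim_k\HH^1(\nabla A)+n+1$ when $n\geq 3$, for all $\al,\be$. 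Hence $\chi(\HH^\bullet(\nabla A))=6$ if $n=2$ and $=n+2$ if $n\geq 3$. Since $\Db(\tails A)$ has a full strong exceptional sequence of length $2n+2$, we have $\rk K_0(\Db(\tails A))=2n+2$, which equals $6$ when $n=2$ and differs from $n+2$ when $n\geq 3$; this yields \eqref{eq:E1} and \eqref{eq:E2}, modulo the unipotency statement in \eqref{eq:E1} handled below.

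To describe $\mathfrak s$, I would use \thmref{thm:MM} to identify $\Db(\tails A)\cong\Db(\mod\nabla A)$; since $\gldim\nabla A=2$, the Serre functor of the latter is the Nakayama functor $-\Lotimes_{\nabla A}D(\nabla A)$, whose action on $K_0\cong\Z^{2n+2}$, in the basis given by the classes of the indecomposable projectives, is (up to the usual transpose/inverse conventions, which give conjugate matrices) the Coxeter matrix $\mathfrak s=(C^{-1})^{\mathsf T}C$, where $C=(\dim_k e_i(\nabla A)e_j)_{i,j}$ is the Cartan matrix of $\nabla A$. Now $e_i(\nabla A)e_j=A_{j-i}$, and since $A$ is noetherian AS-regular of dimension $3$ with generators in degrees $1,n$ and relations in degrees $n+2,2n+1$, its Hilbert series satisfies $H_A(t)^{-1}=(1-t)(1-t^n)(1-t^{n+1})=1-t-t^n+t^{n+2}+t^{2n+1}-t^{2n+2}$. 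Writing $N$ for the $(2n+2)\times(2n+2)$ nilpotent matrix with $1$'s on the first superdiagonal, this means $C=H_A(N)$ is upper unitriangular with $C^{-1}=I-N-N^{n}+N^{n+2}+N^{2n+1}$.

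From here the two cases split. For $n\geq 3$ I would compute the trace: $\tr(\mathfrak s)=\sum_{d\geq 0}(2n+2-d)\,g_d\dim_k A_d$, where $g_d$ is the $t^d$-coefficient of $H_A(t)^{-1}$, so only $d\in\{0,1,n,n+2,2n+1\}$ contribute; using $\dim_k A_1=1$, $\dim_k A_n=2$, $\dim_k A_{n+2}=3$ and $\dim_k A_{2n+1}=5$ this gives $\tr(\mathfrak s)=(2n+2)-(2n+1)-2(n+2)+3n+5=n+2\neq 2n+2$, and since a unipotent $(2n+2)\times(2n+2)$ matrix has trace $2n+2$, $\mathfrak s$ is not unipotent. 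For $n=2$ the trace equals $6$, which is not enough on its own; instead I would write out the $6\times 6$ matrix $\mathfrak s=(C^{-1})^{\mathsf T}C$ explicitly and check that $\mathfrak s-I$ has rank $2$. Together with $\tr(\mathfrak s)=6$ and $\det(\mathfrak s)=\det(C)^{-1}\det(C)=1$, this forces the characteristic polynomial of $\mathfrak s$ to be $(t-1)^4(t^2-2t+1)=(t-1)^6$, so $\mathfrak s$ acts unipotently and \eqref{eq:E1} is proved.

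The main obstacle is getting the Hilbert data exactly right, since that is where the whole $n=2$ versus $n\geq 3$ dichotomy sits: the monomial $y^2$ has degree $2n$, which equals $n+2$ exactly when $n=2$, so $\dim_k A_{n+2}$ is $4$ for $n=2$ but $3$ for $n\geq 3$, and it is precisely this extra dimension that keeps $\tr(\mathfrak s)$ equal to $2n+2$ (hence compatible with unipotency, once the rank of $\mathfrak s-I$ is determined) when $n=2$, whereas for $n\geq 3$ the trace drops to $n+2$ and unipotency fails.
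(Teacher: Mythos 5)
Your proposal is correct, and it reaches the paper's conclusions by a partly different route, so let me compare. The reduction of the Euler characteristics to Theorem~\ref{thm:main} (via $\dim_k\HH^2(\nabla A)=\dim_k\HH^1(\nabla A)+5$ for $n=2$ and $\dim_k\HH^2(\nabla A)=\dim_k\HH^1(\nabla A)+n+1$ for $n\geq 3$) is exactly what the paper does, as is the identification $\rk K_0(\Db(\tails A))=2n+2$ from the full strong exceptional sequence. The difference lies in how (non)unipotency of $\mathfrak s$ is handled. For $n\geq 3$ the paper argues by contradiction using Happel's trace formula: unipotency would force $\tr\mathfrak s=2n+2$, whereas $\tr\mathfrak s=-\tr\Phi_{\nabla A}=\chi(\HH^\bullet(\nabla A))=n+2$. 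You instead compute $\tr\mathfrak s$ directly from $C=H_A(N)$ and $H_A(t)^{-1}=(1-t)(1-t^n)(1-t^{n+1})$; your formula $\tr\bigl((C^{-1})^{\mathsf T}C\bigr)=\sum_d(2n+2-d)g_d\dim_kA_d$ and the values $\dim_kA_1=1$, $\dim_kA_n=2$, $\dim_kA_{n+2}=3$ (resp.\ $4$ when $n=2$, coming from $y^2$), $\dim_kA_{2n+1}=5$ are all correct, giving $\tr\mathfrak s=n+2$ for $n\geq3$ and $6$ for $n=2$. This bypasses Happel's trace formula entirely and in effect re-proves, by hand, the instance of it that the paper invokes; the cost is that you must justify the Hilbert series and the identification of $\mathfrak s$ with the Coxeter-type matrix in the projective basis, both of which are standard (and your matrix $(C^{-1})^{\mathsf T}C$ is the inverse-transpose of the paper's $M^{-1}M^{\mathsf T}$, which changes nothing for unipotency, trace, or $\rank(\mathfrak s-I)$). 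For $n=2$ the paper simply displays the $6\times6$ matrix and asserts unipotency; your refinement --- check $\rank(\mathfrak s-I)=2$, then combine with $\tr\mathfrak s=6$ and $\det\mathfrak s=1$ to force the characteristic polynomial to be $(t-1)^4(t^2-2t+1)=(t-1)^6$ --- is a valid and more transparent way to carry out that verification, and the rank claim does hold: for the paper's explicit $\mathfrak s$, the row space of $\mathfrak s-I$ is spanned by $(1,1,1,1,1,1)$ and $(-5,-4,-3,-2,-1,0)$.
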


\begin{proof}
First, (\ref{eq:E1}) and (\ref{eq:E2}) follow from \thmref{thm:main}. If $n=2$,
then the Gram matrix of $K_0(\Db(\tails A))$ is given by 
\[ M=
\begin{pmatrix}
1 & 1 & 2 & 3 & 4 & 5\\
0 & 1 & 1 & 2 & 3 & 4\\
0 & 0 & 1 & 1 & 2 & 3\\
0 & 0 & 0 & 1 & 1 & 2\\
0 & 0 & 0 & 0 & 1 & 1\\
0 & 0 & 0 & 0 & 0 & 1
\end{pmatrix},\] 
so one can verify that
\[
{\mathfrak s}= M^{-1}M^{T}=
\begin{pmatrix}
7 & 5 & 4 & 3 & 2 & 1\\
1 & 2 & 1 & 1 & 1 & 1\\
-5 & -4 & -2 & -2 & -1 & 0\\
-6 & -5 & -4 & -2 & -2 & -1\\
-1 & -1 & -1 & -1 & 0 & -1\\
5 & 4 & 3 & 2 & 1 & 1
\end{pmatrix}\]
is unipotent.
We now consider the case $n\geq 3$.
Suppose that ${\mathfrak s}$ acts unipotently on $K_0(\Db(\tails A))$.
Then ${\mathfrak s}=M^{-1}M^{T}$ is unipotent
where $M$ is the Gram matrix.
Since $M$ coincides with the Cartan matrix of $\nabla A$, 
the Coxeter matrix $\Phi_{\nabla A}$ of $\nabla A$ is obtained as $-M^{-T}M$ (see \cite[Section 1]{H2}), so we have
\[ -\tr \Phi_{\nabla A} = \tr M^{-T}M = \tr M^{-1}M^{T} = \tr {\mathfrak s}.\]
Unipotency of ${\mathfrak s}$ implies $\tr {\mathfrak s}=2n+2$, so it follows that $-\tr \Phi_{\nabla A}=2n+2$.
By Happel's trace formula \cite{H2}, we have $\chi(\HH^\bullet(\nabla A))=-\tr \Phi_{\nabla A}=2n+2$, but this contradicts (\ref{eq:E2}).
Hence ${\mathfrak s}$ does not act unipotently.
\end{proof}

In respect of \propref{prop:K}, when $n=2$, $\Db(\tails A)$ behaves a bit like a geometric object (a smooth projective surface),
but when $n\geq 3$, $\Db(\tails A)$ is not equivalent to the derived category of any smooth projective surface.

\section*{Acknowledgments}
The authors are grateful to Teruyuki Yorioka for his support and helpful discussions.
They also thank the referee for useful comments in improving the paper.
The first author was supported by JSPS Grant-in-Aid for Early-Career Scientists 18K13397.
The second author was supported by JSPS Grant-in-Aid for Early-Career Scientists 18K13381.

 
\end{document}